\title[Fusion-stable structures]
{Fusion-stable structures on triangulated categories}
\author{Yu Qiu}
\address{Qy:
	Yau Mathematical Sciences Center and Department of Mathematical Sciences,
	Tsinghua University,
    100084 Beijing,
    China.
    \&
    Beijing Institute of Mathematical Sciences and Applications, Yanqi Lake, Beijing, China}
\email{yu.qiu@bath.edu}
\author{Xiaoting Zhang}
\address{Zx:
    Beijing Advanced Innovation Center for Imaging Theory and Technology, Academy for Multidisciplinary Studies, Capital Normal University, Beijing 100048, China}
\email{xiaoting.zhang09@hotmail.com}
\tikzset{->-/.style={decoration={  markings,  mark=at position #1 with
    {\arrow{>}}},postaction={decorate}}}
\tikzset{-<-/.style={decoration={  markings,  mark=at position #1 with
    {\arrow{<}}},postaction={decorate}}}
\theoremstyle{plain}
\newtheorem{theorem}{Theorem}[section]
\newtheorem{lemma}[theorem]{Lemma}
\theoremstyle{definition}
\newtheorem{definition}[theorem]{Definition}
\newtheorem{example}[theorem]{Example}
\newtheorem{remark}[theorem]{Remark}
\numberwithin{equation}{section}
\def\hua{\mathcal}
\def\hh{\mathcal}
\def\<{\langle}
\def\>{\rangle}
\def\={\simeq}
\def\to{\rightarrow}
\def\NN{\mathbb{N}}
\def\ZZ{\mathbb{Z}}
\def\CC{\mathbb{C}}
\def\RR{\mathbb{R}}
\renewcommand{\k}{\mathbf{k}}
\def\C{\hh{C}}
\newcommand{\D}{\operatorname{\hh{D}}}
\def\dim{\operatorname{dim}}
\def\Add{\operatorname{add}}
\def\Aut{\operatorname{Aut}}
\def\Sim{\operatorname{Sim}}
\def\Hom{\operatorname{Hom}}
\def\End{\operatorname{\mathcal{E}nd}}
\def\FP{\operatorname{FPdim}}
\def\Ext{\operatorname{Ext}}
\def\Stab{\operatorname{Stab}}
\def\Stap{\operatorname{Stab}^\circ}
\def\diff{\operatorname{d}}
\def\Br{\operatorname{Br}}
\def\rep{\operatorname{rep}}
\def\deg{\operatorname{deg}}
\newcommand{\h}{\hh{H}}            %heart
\newcommand{\EG}{\operatorname{EG}}       %exchange graph of heart (oriented)
\newcommand{\EGp}{\operatorname{EG}^\circ}       %principal component
\newcommand{\SEG}{\operatorname{SEG}}       %exchange graph of silting
\newcommand{\SEGp}{\operatorname{SEG}^\circ}       %principal component
\newcommand{\CEG}{\operatorname{CEG}}             %cluster tilting sets
\newcommand{\uCEG}{\underline{\CEG}}
\renewcommand{\mod}{\operatorname{mod}}
\newcommand{\Cone}{\operatorname{Cone}}
\newcommand{\id}{\operatorname{id}}
\newcommand{\per}{\operatorname{per}}
\newcommand{\pvd}{\operatorname{pvd}}
\newcommand{\ST}{\operatorname{ST}}        %spherical twists
\def\ww{node[white]{$\bullet$}node{$\circ$}}
\newcommand{\on}[1]{\operatorname{#1}}
\def\hX{\hua{X}}
\def\hY{\hua{Y}}
\def\hT{\hua{T}}
\def\hF{\hua{F}}
\def\hP{\hua{P}}
\def\hQ{\hua{Q}}
\def\Fu{\mathcal{G}} %{\on{\mathsf{Fus}}}
\def\KG{\on{\mathsf{K}}}
\def\TLJ{\mathsf{TLJ}}
\def\wt{\mathbf{w}}         %weight
\def\vv{\mathbf{v}}         %value
\def\spec{\mathbf{Q}}        %weighted specie
\def\wtF{\mathsf{fold}_\wt}
\def\unfold{\mathsf{unfold}}
\def\GNQ{\Gamma_N Q}
\def\DQ{\D_\infty(Q)}
\def\VG{\on{\mathsf{vec}}^G(\k)}
\def\wtV{\mathbf{V}}
\def\wtW{\mathbf{W}}
\def\wtf{\mathbf{f}}
\def\bfi{\mathbf{i}}
\def\bfj{\mathbf{j}}
\def\bfk{\mathbf{k}}
\def\bfl{\mathbf{l}}
\def\bfa{\mathbf{a}}
\def\bfp{\mathbf{p}}
\def\bfn{\mathbf{n}}
\def\bfs{\mathbf{s}}
\def\WT{\Xi}
\def\1{\mathbf{1}}
\def\2{\mathbf{2}}
\def\3{\mathbf{3}}
\def\4{\mathbf{4}}
\def\CYGQ{\Fu_N(\overline{\spec})}
\newcommand{\tlj}[2]{[#2]_{#1}}
\begin{document}
%=========================================================
%=========================================================
%=========================================================

\def\Qy#1{\textcolor{blue}{#1}}
\setlength\parindent{0em}
\setlength{\parskip}{5pt}

%=========================================================
\begin{abstract}
Let $\mathcal{G}$ be a fusion category acting on a triangulated category $\mathcal{D}$, in the sense that $\mathcal{D}$ is a $\mathcal{G}$-module category. Our motivation example is fusion-weighted species, which is essentially Heng's construction. We study $\mathcal{G}$-stable tilting, cluster and stability structures on $\mathcal{D}$. In particular, we prove the deformation theorem for $\mathcal{G}$-stable stability conditions.

A first application is that Duffield-Tumarkin's categorification of cluster exchange graphs of finite Coxeter-Dynkin type can be naturally realized as fusion-stable cluster exchange graphs. Another application is that the universal cover of the hyperplane arrangements of any finite Coxeter-Dynkin type can be realized as the space of fusion-stable stability conditions for certain ADE Dynkin quiver.
This provides an alternative uniform proof of $K(\pi,1)$-conjecture in the finite Coxeter-Dynkin case.
\end{abstract}

\maketitle

\tableofcontents\addtocontents{toc}{\setcounter{tocdepth}{1}}

%=========================================================
\section{Introductions}
%=========================================================

In representation theory of algebras, tilting theory plays important roles.
There is a couple of closely related versions, e.g.
t-structures/hearts with Happen-Reiten-Smal{\o} tilting and
$m$-cluster tilting/silting with (forward) mutations.
They are used in the study of triangle/derived equivalences
from Morita theory, homological mirror symmetry,
additive categorification of Fomin-Zelevinsky's cluster algebras and more (see e.g. \cite{Ke2,Toen,AI12,KQ1}).

The stability structure on triangulated (and abelian) categories is another class of key structures.
Originally coming from geometric invariant theory,
Bridgeland introduced the triangulated version and showed that all stability conditions on a triangulated category $\D$ form a complex manifold $\Stab(\D)$.
The motivation also comes from the study of $\Pi$-stability of D-branes from string theory
with various applications, e.g. Donaldson-Thomas theory, cluster theory and Teichm\"{u}ller theory (cf. \cite{B1,B2,KQ2,QW}).

When there is a finite group $G$ acting on an abelian/triangulated category $\C$, 
the $G$-equivariant (abelian/triangulated) category is well-defined, cf. \cite{Ela},
but the $G$-invariant/$G$-stable ones (as orbit categories) may not, cf. \cite{Ke0,CQ}.
In some special cases (e.g. for acyclic quivers),
this can be done via folding technique (and Frobenius morphisms/functors, cf. \cite{DD}).
The tilting and stability structures on the folded ($G$-stable) category $\C/G$
can be naturally identified with the $G$-stable tilting and stability structures on $\C$, cf. \cite{CQ}.
Then many statements can be extended to more general cases, e.g.
from simply laced ADE Dynkin case to non-simply laced BCFG cases, cf. \Cref{thm:KQ}.

On the other hand, fusion categories were introduced by Etingof-Nikshych-Ostrik \cite{ENO},
as a natural generalization of finite groups and their behavior over $\CC$.
It plays an important role in the study of Hopf algebras and quantum groups \cite{EGNO}.
In the work of Heng \cite{H1},
fusion actions on abelian categories (e.g. representations of a quiver) are introduced and studied
to prove a generalization of Gabriel theorem for all finite Coxeter-Dynkin (A to I) type quivers.
Here, a fusion $\Fu$-action on a category $\C$ means that $\C$ is a $\Fu$-module category.
A closely related work is Duffield-Tumarkin's categorifications of non-integer quivers \cite{DT1,DT2},
where they use weighted-folding technique to categorify cluster structure of non-simply laced type quivers (in particular, type H and I).

Inspired by \cite{H1,H2,DT1,DT2},
we systematically study various fusion-stable tilting/stability structures on triangulated categories.
Besides generalizing many known results with ease, we also prove the fusion-stable deformation theorem on Bridgeland stability conditions (\Cref{thm:stab}).
As applications, we prove the following two results.

%=========================================================
\begin{figure}[ht]\centering
\def\snn{node{\small{$\bullet$}}}
\def\sww{node[white]{\small{$\bullet$}}node{\small{$\circ$}}}
\makebox[\textwidth][c]{
\begin{tikzpicture}[scale=.6]
\begin{scope}[rotate=90]
\draw(-90:.2)coordinate (v);
\foreach \j in {0,...,2}{
\draw[thick]($(120*\j:4)+(v)$) coordinate (a\j)
            ($(120*\j:2.3)-(v)$) coordinate (c\j)
            (120*\j+40:4) coordinate (b\j)
            (120*\j-40:4) coordinate (d\j) ;
\draw[dashed,gray,thick,fill=orange!99, opacity=0.1]
    (a\j)to(b\j)to(c\j)to(d\j)to(a\j);
\draw[thick](a\j)to(b\j) (d\j)to(a\j);}
\draw[fill=cyan, opacity=0.1] (d0)to(c0)to(b0)to(d1)to(c1)to(b1)to(d2)to(c2)to(b2)to(d0);
\draw[fill=cyan!50, opacity=0.1] (d0)to(a0)to(b0)to(d1)to(a1)to(b1)to(d2)to(a2)to(b2)to(d0);
\draw[dashed, gray] (d0)to(c0)to(b0)to(d1)to(c1)to(b1)to(d2)to(c2)to(b2)to(d0);

\draw[thick](v)edge(a0)edge(a1)edge(a2)\snn
    (b0)to(d1)(b1)to(d2)(b2)to(d0)
    (a0)\snn(a1)\snn(a2)\snn  (b0)\snn(b1)\snn(b2)\snn (d0)\snn(d1)\snn(d2)\snn;
\draw[dashed,gray]($(0,0)-(v)$)edge(c1)edge(c2)edge(c0)\sww
     (c0)\sww(c1)\sww(c2)\sww;
\end{scope}
%=========================================================
\begin{scope}[shift={(9,0)}]
\foreach \j in {1,...,12}
    {\draw[thick](30*\j:4) coordinate (t\j) to (30*\j+30:4) \snn;}
\foreach \j in {1,...,4}
    {\draw[](90*\j:2.9) coordinate (s\j) \snn;}
\draw[](135:1.5) coordinate (x1) (-45:1.5) coordinate (x2);
\draw[gray](-135:2.3) coordinate (y1) (45:2.3) coordinate (y2);
\draw[fill=cyan, opacity=0.2] (t3)to(y2)to(t12)to(t1)to(t2)to(t3);
\draw[fill=cyan, opacity=0.2] (t9)to(y1)to(t6)to(t7)to(t8)to(t9);
\draw[fill=cyan, opacity=0.1] (x1)to(s1)to(t4)to(t5)to(s2)to(x1);
\draw[fill=cyan, opacity=0.1] (x2)to(s3)to(t10)to(t11)to(s4)to(x2);

\draw[fill=orange!99, opacity=0.1] (t2)to(s1)to(t4)to(t3)to(t2);
\draw[fill=orange!99, opacity=0.1] (t5)to(s2)to(t7)to(t6)to(t5);
\draw[fill=orange!99, opacity=0.1] (t8)to(s3)to(t10)to(t9)to(t8);
\draw[fill=orange!99, opacity=0.1] (t11)to(s4)to(t1)to(t12)to(t11);

\draw[dashed,gray](t2)to(s1)to(t4) (t5)to(s2)to(t7)
     (t8)to(s3)to(t10) (t11)to(s4)to(t1)
     (s1)to(x1)to(s2) (s3)to(x2)to(s4) (x1)to(x2);
\draw[thick]
     (t3)to(y2)to(t12) (t6)to(y1)to(t9) (y1)\snn to(y2)\snn
     (x1)\sww(x2)\sww(s1)\sww(s2)\sww(s3)\sww(s4)\sww;
\end{scope}
%=========================================================
\begin{scope}[shift={(18,0)}]
\draw(-90:.2)coordinate (v);
\foreach \j in {0,...,18}
    {\draw[thick](20*\j:4.1) coordinate (t\j);}
\foreach \j in {0,3,...,18}
    {\draw[gray,dashed,fill=orange!99, opacity=0.1]
        (20*\j+20:4.1)to (20*\j:4.2) coordinate (t\j)to(20*\j-20:4.1)to(20*\j:3.2) coordinate (s\j);
     \draw[gray,dashed]
        (s\j)to(20*\j+20:4.1)to(t\j)to(20*\j-20:4.1)to(s\j) \sww;}

\draw[gray](-90:.2)coordinate (x)\sww;
\foreach \j in {0,...,2}
    {\draw[dashed,gray](x)to($(30+120*\j:2)$)coordinate(v\j)\sww;}

\draw[gray,dashed,fill=cyan, opacity=0.1](s0)to(v0)to(s3)to(t2)to(t1)to(s0);
\draw[gray,dashed,fill=cyan, opacity=0.1](s6)to(v1)to(s9)to(t8)to(t7)to(s6);
\draw[gray,dashed,fill=cyan, opacity=0.1](s12)to(v2)to(s15)to(t14)to(t13)to(s12);
\draw[thick](90:.2)coordinate (y);
\foreach \j in {0,...,2}
    {\draw[thick](y)to($(90+120*\j:3.2)$)coordinate(w\j);}

\draw[gray,dashed](s0)to(v0)to(s3)to(t2)to(t1)to(s0)
                (s6)to(v1)to(s9)to(t8)to(t7)to(s6)
                (s12)to(v2)to(s15)to(t14)to(t13)to(s12);

\draw[thick,fill=cyan, opacity=0.25](w0)to(t3)to(t4)to(t5)to(t6);
\draw[thick,fill=cyan, opacity=0.25](w1)to(t9)to(t10)to(t11)to(t12);
\draw[thick,fill=cyan, opacity=0.25](w2)to(t15)to(t16)to(t17)to(t0);

\draw[thick](t3)to(w0)to(t6) (t9)to(w1)to(t12) (t15)to(w2)to(t18)
    (t0)to(t1)to(t2)to(t3)to(t4)to(t5)to(t6)to(t7)to(t8)to(t9)to
    (t10)to(t11)to(t12)to(t13)to(t14)to(t15)to(t16)to(t17)to(t18)
    (y)\snn(w0)\snn(w1)\snn(w2)\snn;
\draw[gray](v0)\sww(v1)\sww(v2)\sww(x)\sww;
\foreach \j in {0,...,18}{\draw[thick](t\j)\snn;}
\foreach \j in {0,3,...,18}{\draw[gray](s\j)\sww;}
\end{scope}
\end{tikzpicture}
}
\caption{The cluster exchange graphs $\uCEG(\Delta)$ for type $A_3/B_3/H_3$.\\
Type $B_3/H_3$ can be obtained by weighted folding type $D_4/D_6$, cf. \Cref{fig:BCEI} and \Cref{fig:H234}, respectively.
}\label{fig:ABH}
\end{figure}
%=========================================================

\begin{itemize}
  \item[\textbf{Setup}]
    Let $\Delta$ be any finite type Coxeter-Dynkin (type A to I) quiver and
    $\Omega$ be a simply laced finite type quiver with respect to some fusion $\Fu$-action
    (not necessarily unique or connected, cf. \Cref{def:unfolding})
    with a weighted folding $\wtF\colon Q \to \spec$.
\end{itemize}
\begin{description}
  \item[\Cref{thm:ceg}] The cluster exchange graph of type $\Delta$
  can be realized as the $\Fu$--stable cluster exchange graph of type $\Omega$, cf. \Cref{fig:ABH}
and \Cref{ex:w-folding}.

  \item[\Cref{thm:Kleinian}]
   The universal cover of the fundamental domain of hyperplane arrangements of type $\Delta$
   can be realized as the space of $\Fu$-stable stability conditions of type $\Omega$,
   which is a finite type component and hence contractible.
\end{description}
Note that \Cref{thm:Kleinian} gives an alternative uniform proof of the $K(\pi,1)$-conjecture,
which was originally conjectured by Deligne in \cite{D}, cf. see \cite{Par} for more details, and
cf. more recent works \cite{QW,AW} (that reproves ADE case and some non-simply laced cases).

Another remark is that the weighted folding has been studied in various aspects:
\begin{itemize}
  \item for root systems (cf. \Cref{fig:D6=2H3} and \Cref{fig:E8=2H4}, which is well-known),
  \item for braid groups (cf. \Cref{thm:Cr} of \cite{Cr}) and
  \item for cluster combinatorics/generalized associahedrons (cf. \cite{FZ,FR}).
\end{itemize}
Basically we need to put the first two of these together with the fusion construction consistently to obtain the second application above.

%=========================================================
\subsection*{Acknowledgement}
%=========================================================
We would like to thank Alastair King for inspiring discussions
%(in particular pointed out \Cref{ex:KO})
during our visit to Bath in July-August of 2023 while most of this work was done.
Qy also thanks Bernhard Keller and Han Zhe for sharing their expertise during collaborating related works.
This paper owes a debt to the work of Heng.

This work is supported by
National Natural Science Foundation of China (Grant No.12425104, No. 12101422) and
National Key R\&D Program of China (No.2020YFA0713000).

%=========================================================
\section{Preliminaries}
%=========================================================
\subsection{Basics}
%=========================================================
\paragraph{\textbf{Additive categories}}\

We fix an algebraically closed field $\k$ and all categories are assumed to be $\k$-linear.

In an additive category $\C$ with a subcategory $\hua{B}$,
we denote by
\[
    \hua{B}^{\perp_{\C}}\colon= \left\{C\in\C \mid \Hom(B,C)=0, \forall B\in \hua{B}\right\}.
\]
We may write $\hua{B}^{\perp}$ when $\C$ is implied. The subcategory ${}^{\perp_{\C}}\hua{B}$ is defined similarly.
For subcategories $\hT,\hF$ of an abelian category $\h$,
denote by
$$
    \hT*\hF\colon= \{M\in\h\mid\text{$\exists$ s.e.s.
        $0\to T\to M\to F\to0$ s.t. $T\in\hT, F\in\hF$} \}.
$$
If additionally $\Hom(\hT,\hF)=0$, then we write $\hT\perp\hF$ for $\hT*\hF$.
For a subcategory $\C$ of $\h$,
denote by $\<\C\>$ the full subcategory of $\h$ consisting of objects that admit a filtration of s.e.s. with factors in $\C$.

A morphism $f\colon E\to F$ in an arbitrary additive category is called \emph{right minimal} if
it does not have a direct summand of the form $E'\to0$.
Similarly for \emph{left minimal}.

Let $\C$ be an additive category with a full subcategory $\hX$.
A \emph{right $\hX$-approximation} of an object $C$ in $\C$ is a morphism $f\colon X\to C$ with $X\in\hX$,
such that
\[\Hom(X',f)=f\circ{}_-\colon\on{Hom}(X',X)\to\on{Hom}(X',C)\]
is an epimorphism for any $X'\in \hX$.
Dually, a \emph{left $\hX$-approximation} of an object $C$ in $\C$ is a morphism $g\colon C\to X$ with $X\in\hX$,
such that
\[\Hom(g,X')={}_-\circ{}g\colon\on{Hom}(X,X')\to\on{Hom}(C,X')\]
is an epimorphism for any $X'\in\hX$.
A \emph{minimal right/left $\hX$-approximation} is a right/left $\hX$-approximation that is also right/left minimal.
We say $\hX$ is \emph{contravariantly finite} (resp. \emph{covariantly finite}) if every object in $\C$ admits a right (resp. left) $\hX$-approximation.
It is \emph{functorially finite} if it is both contravariantly finite and covariantly finite.

%=========================================================
\paragraph{\textbf{Coxeter diagram}}\

A finite Coxeter diagram $\Delta$ is one of the (edge-)weighted graphs listed in \Cref{fig:Cox},
where the weights are assumed to be 3 if not written.
In general, a Coxeter diagram is an edge-valued graph $\Delta$ with function $\vv\colon\Delta_1\to\ZZ_\ge3$.

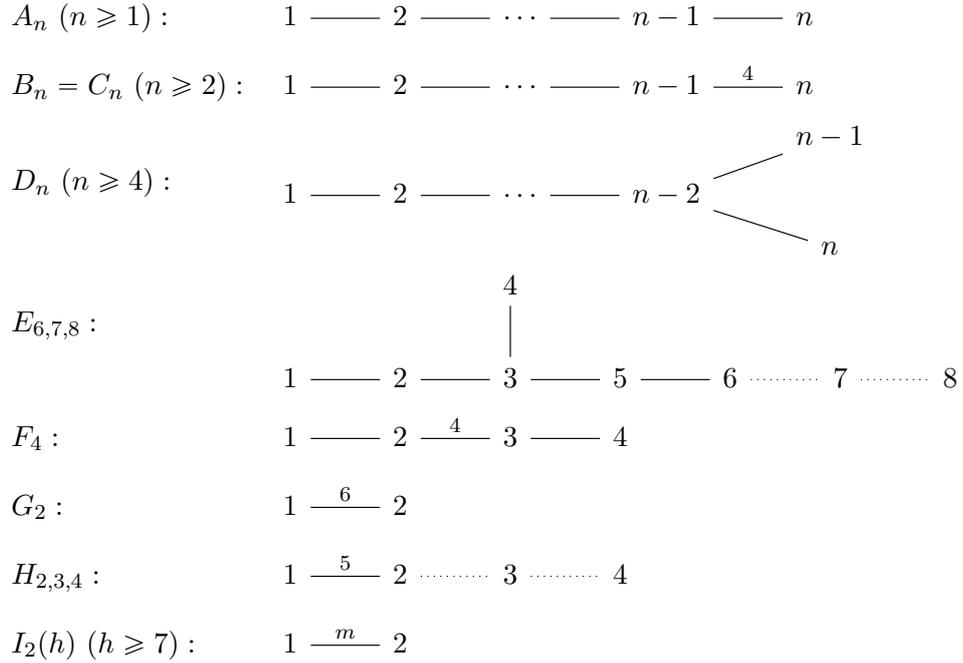
\begin{figure}[htpb]\centering
\[\renewcommand{\arraystretch}{2}
		\begin{array}{llr}
			A_{n}\ (n\geq 1): \quad &
            \begin{tikzcd}[every arrow/.append style={no head}]
              1 \ar[r]& 2 \ar[r]& \cdots \ar[r]&n-1\ar[r]& n
            \end{tikzcd}\\
			B_{n}=C_n\ (n\geq 2): \quad &
            \begin{tikzcd}[every arrow/.append style={no head}]
              1 \ar[r]& 2 \ar[r]& \cdots \ar[r]&n-1\ar[r,"4"]& n
            \end{tikzcd}\\
			D_{n}\ (n\geq 4): \quad &
            \begin{tikzcd}[every arrow/.append style={no head},row sep=.5pc]
                &&&& n-1\\
              1\ar[r] & 2 \ar[r]& \cdots \ar[r]&n-2\ar[ur]\ar[dr]\\
                &&&& n
            \end{tikzcd}\\
			E_{6,7,8}: \quad &
            \begin{tikzcd}[every arrow/.append style={no head}]
                &&4\ar[d]\\
              1 \ar[r]& 2 \ar[r]& 3 \ar[r] & 5 \ar[r] & 6 \ar[r,dotted] & 7 \ar[r,dotted] & 8
            \end{tikzcd}\\
			F_4: \quad &
            \begin{tikzcd}[every arrow/.append style={no head}]
              1 \ar[r]& 2 \ar[r,"4"]& 3 \ar[r] & 4
            \end{tikzcd}\\
			G_2: \quad &
            \begin{tikzcd}[every arrow/.append style={no head}]
              1 \ar[r,"6"]& 2
            \end{tikzcd}\\
            H_{2,3,4}: \quad &
            \begin{tikzcd}[every arrow/.append style={no head}]
              1 \ar[r,"5"]& 2 \ar[r,dotted]& 3 \ar[r,dotted] & 4
            \end{tikzcd}\\			I_2(h)\ (h\geq 7): \quad &
            \begin{tikzcd}[every arrow/.append style={no head}]
              1 \ar[r,"h"]& 2
            \end{tikzcd}
		\end{array}
\]
\caption{A complete list of finite Coxeter graphs}
\label{fig:Cox}
\end{figure}

%=========================================================
\paragraph{\textbf{Braid and Weyl groups}}\

\begin{definition}\label{def:W.Br}
The (Artin) braid group $\Br_\Delta$ associated to $\Delta$ is defined by the presentation
\begin{gather}\label{eq:BrD}
    \Br_\Delta\colon= \< b_{\bfi}\mid \bfi\in\Delta_0 \>\Big/
        (\Br^{m_{\bfi,\bfj}}(b_{\bfi},b_{\bfj})\mid \forall \bfi,\bfj\in\Delta_0),
\end{gather}
where
\[
  m_{\bfi,\bfj}=
    \begin{cases}
        2, & \mbox{if there is no edge beteween $\bfi$ and $\bfj$};\\
        \vv(\alpha), & \mbox{if there is exactly one edge $\bfi \frac{\alpha}{\quad} \bfj$}; \\
        \infty,& \mbox{if there are multi edges between $\bfi$ and $\bfj$}.
    \end{cases}
\]
$\Br^{m}(a,b)$ is the relation $\underbrace{aba\cdots}_{m}=\underbrace{bab\cdots}_{m}$
and by convention, $\Br^{\infty}(a,b)$ means no relation.

The Weyl group $W_\Delta$ is defined by the presentation
\begin{gather}\label{eq:WD}
    W_\Delta\colon= \< b_{\bfi}\mid \bfi\in\Delta_0 \>\Big/
        ( b_{\bfi}^2=1, \Br^{m_{\bfi,\bfj}}(b_{\bfi},b_{\bfj})\mid \forall \bfi,\bfj\in\Delta),
\end{gather}
\end{definition}

There is a canonical surjection $\pi_\Delta\colon\Br_\Delta\to W_\Delta$ sending the standard generators to the standard ones.

For a general acyclic quiver $Q$, one can also define the associated braid group $\Br_Q$,
for its underlygraph with trivial function $\vv\equiv3$.
%=========================================================
\subsection{Various structures}
%=========================================================

%=========================================================
\paragraph{\textbf{Torsion pairs, t-structures and hearts}}\

A \emph{torsion pair} in an abelian category $\h$ consists of two additive subcategories $\hT$ and $\hF$ such that $\h=\hT\perp\hF$.
Denote by $\Sim\h$ the set of simples of $\h$. An abelian category $\h$ is called a \emph{length category} if any object in $\h$ admits a finite filtration with factors in $\Sim\h$. The abelian category $\h$ is called $\emph{finite}$ if it is a length category with a finite number of simples.

Similarly, we have the corresponding notions and notations in a triangulated category $\D$, replacing s.e.s. by triangles.
A \emph{t-structure} $\hP$ of $\D$ is the torsion part of some torsion pair of $\D$ which is closed under shift (i.e.~$\hP[1]\subset\hP$).
We will write it as $\D=\hP\perp\hP^{\perp}$.
Such a t-structure is \emph{bounded} if for any object $M$ in $\D$, we have $M[\gg0]\in\hP$ and $M[\ll0]\in\hP^\perp$.

The \emph{heart} of a t-structure $\hP$ is $\h=\hP^\perp[1]\cap\hP$.
When $\hP$ is bounded, its heart is a heart of the triangulated category $\D$, in the sense that
\begin{itemize}
    \item $\Hom(\h[a],\h[b])=0$ for any $a>b$ in $\ZZ$;
    \item any object $E$ in $\D$ admits an HN-filtration of triangles,
    whose factors are $H_i[k_i]$ for some $H_i\in\h$, $1\le i\le l$, and integers $k_1>\cdots>k_l$.
\end{itemize}
Note that any heart of a triangulated category is abelian.

A heart is \emph{finite} if it is a length category and with finitely many simples.
%=========================================================
\paragraph{\textbf{Silting/cluster tilting}}\
%Similarly, we have the following dual notation of hearts.

A \emph{silting subcategory} $\hY$ in a triangulated category $\D$ is
an additive subcategory such that $\Ext^{>0}(\hY,\hY)=0$ and $\D=\on{thick}\hY$.
Here $\on{thick}\hY$ is the smallest triangulated subcategory of $\D$ that is closed under taking direct summands and contains $\hY$.

For an integer $m\ge2$, an $m$-cluster tilting subcategory ($m$-CTS) in $\D$ is
an additive subcategory such that $\Ext^{i}(\hY,\hY)=0$, for $1\le i\le m$,
and $\D=\hY[1]*\cdots*\hY[m]$.

We will unify these two notions, i.e. using $\infty$-cluster to mean silting.
In the case that $\hY=\Add\mathbf{Y}$ for a basic object $\mathbf{Y}$,
we call $\mathbf{Y}$ an \emph{$m$-cluster tilting object}.
Here, $\Add\mathbf{Y}$ is the smallest additive subcategory that is closed under taking direct summands and contains $\mathbf{Y}$.
%=========================================================
\paragraph{\textbf{Tilting and mutation}}\

We proceed to recall a couple of tilting structures.
The first one is (simple) Happel-Reiten-Smal{\o} tilting of t-structures.

\begin{definition}\cite[Def.~3.7]{KQ1}
Let $\h=\hT\perp\hF$ be a torsion pair in a heart $\h$ of a triangulated category $\D$.
Then the \emph{forward} (resp. \emph{backward}) \emph{tilt} $\h^\sharp$ (resp. $\h^\flat$)
with respect to $\hT\perp\hF$ is the heart that admits a torsion pair
$\h^\sharp=\hF[1]\perp\hT$ (resp. $\h^\flat=\hF\perp\hT[-1]$).
Denote the operation by
\[
    \mu^\sharp_\hF\colon\h\to\h^\sharp\quad\text{and}\quad\mu^\flat_\hT\colon\h\to\h^\flat.
\]

The forward/backward tilting is \emph{simple} if $\hF$/$\hT$ equals $\Add S$
for a simple object $S\in\Sim\h$.
Denote by $\mu^{\sharp/\flat}_S$ the simple tilting in such a case.

The exchange graph $\EG(\D)$ has hearts as vertices and simple tiltings as (directed) edges.
\end{definition}

The second tilting structure is (ind) mutation of $m$-CTS.
\begin{definition}\cite[Def.~2.30]{AI12}
Let $\hY$ be an $m$-CTS of $\C$.
Let $\hX$ be an (additive) covariantly finite subcategory of $\hY$.
The forward mutation of $\hY$ with respect to $\hY\setminus\hX$ is
\[
    \mu_{\hY\setminus\hX}^\sharp(\hY)\colon=
        \Add\left\{\hX\cup\{ \mu_{\hX}^\sharp(Z) \}_{Z\in\hY\setminus\hX} \right\},
\]
where $\mu_{\hY\setminus\hX}^\sharp(Z)=\Cone(f)$ is defined by a minimal left $\hX$-approximation $f\colon Z \to \widehat{X}$, for some $\widehat{X}\in\hX$.
Here, $\hY\setminus\hX$ is the additive subcategory of $\hY$ consisting of objects without summands in $\hX$.

We say such a mutation is indecomposable (or ind for short) if $\hY\setminus\hX=\Add Z$ for some indecomposable object $Z$ and one writes $\mu_{Z}^\sharp=\mu_{\hY\setminus\hX}^\sharp$.

The $m$-cluster exchange graph $\CEG_m(\C)$ of $\C$ has $m$-cluster tilting subcategories as vertices and ind mutations as (directed) edges.
As before, silting means $\infty$-cluster tilting and $\SEG(\C)=\CEG_\infty(\C)$.
\end{definition}

%=========================================================
\paragraph{\textbf{Stability structure}}\

Let $\mathbb{H}=\{ \RR_{>0}e^{\bfi \pi \phi}\mid \phi\in (0,1] \}\subset\CC$ be the upper half plane
and $\KG(\D)$ denote the Grothendieck group of a triangulated category $\D$.

\begin{definition}\cite[Def.~1.1]{B1}
A stability condition $\sigma=(Z,\hP_\RR)$ on a triangulated category $\D$ consists of
a central charge $Z\in\Hom_\ZZ( \KG(\D),\CC )$ and a slicing, i.e. a $\RR$-collection $\hP_\RR=\{\hP(\phi)\mid \phi\in\RR\}$ of additive (abelian in fact) subcategories $\hP(\phi)$, satisfying:
\begin{itemize}
  \item $Z$ is compatible with $\hP_\RR$, in the sense that, for any $E\in\hP(\phi)$,
    $Z(E)=m(E)\cdot e^{\bfi \pi \phi}$ for some $m(E)\in\RR_{>0}$.
  \item $\hP_\RR$ is compatible with shifts, i.e. $\hP(\phi+1)=\hP(\phi)[1].$
  \item $\Hom(\hP(\phi_1),\hP(\phi_2))=0$ if $\phi_1>\phi_2$.
  \item Any object $M$ in $\D$ admits a Harder-Narasimhan (HN) filtration, that is a chain of triangles:
  \begin{equation}\label{eq:HN}\begin{tikzcd}[column sep=.8pc]
    0=M_0 \ar[rr] && M_1 \ar[rr]\ar[dl] && M_2 \ar[r]\ar[dl] & \ldots \ar[r] & M_{l-1} \ar[rr] && M_l=M \ar[dl] \\
        &A_1 \ar[ul,dashed]&&A_2 \ar[ul,dashed]&&&& A_l \ar[ul,dashed]
  \end{tikzcd}\end{equation}
  with $A_i\in\hP(\phi_i)$ such that $\phi_1>\phi_2>\cdots>\phi_l$.
\end{itemize}
together with the technical property, known as the support property, cf. \cite[\S~3.1]{IQ}.
\end{definition}
For object $M$ with HN-filtration \eqref{eq:HN}, it has
an upper phase $\phi^+(M)=\phi_1$ and a lower phase $\phi^-(M)=\phi_l$.
Objects in $\hP(\phi)$ are called $\sigma$-semistable or semistable (with respect to $\sigma$) with phase
$\phi(E)=\phi^{\pm}(E)=\phi$ and mass $m(E)$.
Simple objects in $\hP(\phi)$ are called $\sigma$-stable.

The heart $\h_\sigma$ of a stability condition $\sigma=(Z,\hP_\RR)$ is $\hP(0,1]$
for $\hP(I)=\<\hP(\phi)\mid\phi\in I\>$, where $I$ is any interval.
Equivalently, a stability condition $\sigma$ is given by a heart $\h$ and
a stability function $Z$ on $\h$, i.e. a group homomorphism $Z\colon\KG(\h)\to\CC$,
satisfying compatibility (i.e. $Z(H)$ is in $\mathbb{H}$ for $H\in\h$) and HN-property.
As $\KG(\D)\cong\KG(\h)$ for any heart $\h$ of $\D$, the stability function coincide with the central charge.

The key theorem of Bridgeland \cite{B1} shows that all stability conditions on a triangulated category $\D$ form a complex manifold $\Stab(\D)$ with local coordinates given by the central charge $Z$.

Given a finite heart $\h$,
all stability conditions $\sigma$ supported on $\h$ (i.e. stability conditions $\sigma=(Z,\hP_\RR)$ satisfying $\hua{P}(0,1]=\h$) form a complex $n$-cell $U(\h)\cong\mathbb{H}^{\Sim\h}$, cf. \cite[\S~2.2]{B1}.

A \emph{finite-type component} $\Stab_0$ in $\Stab(\D)$ is of the form
\begin{equation}\label{eq:0}
    \Stab_0=\bigsqcup_{\h\in\EG_0} U(\h),
\end{equation}
where $\EG_0$ is a connected component of the exchange graph of hearts in $\D$,
such that any heart in it is finite and has finitely many torsion pairs.
See \cite{QW} for examples and more details.

%=========================================================
\subsection{The quiver categories}\label{sec:quivery}\
%=========================================================

Given a graded quiver $Q$ (i.e. there is a map from the set $Q_1$ of arrows to $\ZZ$) and let $\GNQ$ be the associated (degree N) Ginzburg dg algebra,
constructed as follows (\cite{Ke1}):
\begin{itemize}
\item Let $\overline{Q}^N$ be the $N$-Calabi-Yau double of $Q$,
that is, the graded quiver whose vertex set is $Q_0$ and whose arrows are: the arrows in $Q_1$ (with inherited degrees);
an arrow $a^*\colon j\to i$ in degree $2-N-\deg(a)$ for each arrow $a\colon i\to j$ in $Q_1$ and
a loop $e^*\colon i\to i$ in degree $1-N$ for each vertex $e$ in $Q_0$.
\item The underlying graded algebra of $\GNQ$ is the completion of the graded path algebra $\k\overline{Q}^N$
with respect to the ideal generated by the arrows of $\overline{Q}^N$.
\item  The differential of $\GNQ$ is the unique continuous linear endomorphism,
homogeneous of degree $1$, which satisfies the Leibniz rule and vanished except the following:
\[
  \diff \sum_{e\in Q_0} e^*  =  \sum_{a\in Q_1} \, [a,a^*].
\]
\end{itemize}

We will consider the following categories.
\begin{itemize}
  \item The bounded derived category $\DQ=\D^b(Q)$.
  \item The $m$-cluster category as the orbit category $\C_{m}(Q)=\DQ/\Sigma_m$ for $\Sigma_m=[m-1]\circ\tau^{-1}$, cf. \cite{Ke0}.
      Here, $\tau$ is the Auslander-Reiten functor on $\DQ$.
  \item By convention, let $\C_\infty(Q)=\per(Q)$ be the perfect derived category of $\k Q$, which happens to be triangle equivalent to $\DQ$.
  \item The perfect derived category $\per_N(Q)=\per(\GNQ)$ of $\GNQ$.
  \item The perfectly valued (=finite dimensional) derived category
  \begin{gather}\label{eq:D_NQ}
    \D_N(Q)=\pvd(\GNQ)
  \end{gather}
  of $\GNQ$.
\end{itemize}
It is well-known that $\C_{N-1}(Q)\cong\per(\GNQ)/\pvd(\GNQ)$.

%=========================================================
\section{Fusion actions on triangulated categories}
%=========================================================
%=========================================================
\subsection{Fusion categories and actions}\
%=========================================================

A \emph{fusion category} $\Fu$ is a finite, semisimple, abelian, rigid, monoidal category $(\Fu, \otimes, \mathbbm{1})$ with $\otimes$ being bilinear on morphisms and $\mathrm{End}_{\Fu}(\mathbbm{1})\cong\k$.
For any object $X\in\Fu$, denote by ${}^\star X$ and $X^\star$ its left and right duals respectively.
Without loss of generality, we assume that a fusion category is always strict.
Then the rigidity regarding the existence of left duals means that there exist morphisms
\[\mathrm{ev}_X\colon{}^\star X\otimes X \to \mathbbm{1}\quad \text{and}\quad
\mathrm{coev}_X\colon\mathbbm{1}\to X\otimes {}^\star X\]
such that
 \[(\mathrm{id}_X\otimes \mathrm{ev}_X)\circ(\mathrm{coev}_X\otimes \mathrm{id}_X)=\mathrm{id}_X\quad \text{and}\quad (\mathrm{ev}_X\otimes \mathrm{id}_{{}^\star X})\circ(\mathrm{id}_{{}^\star X}\otimes \mathrm{coev}_X)=\mathrm{id}_{{}^\star X}.\]
%If one use the following cap and cup diagrams
%\[\begin{tikzcd}
%    \text{\color{red}draw diagrams}
%\end{tikzcd}\]
%to illustrate $\mathrm{ev}_X$ and $\mathrm{coev}_X$, then the above two equalities can be displayed as:
%\[\begin{tikzcd}
%    \text{\color{red}draw diagrams}
%\end{tikzcd}\]
For the existence of right duals, one has similar morphisms and equalities.

\begin{example}
Let $G$ be a finite group.
There are two natural ways for associating fusion categories:
\begin{itemize}
  \item Let $\VG$ be the category of finite dimensional $G$-graded vector spaces.
  %, where the associator is given by a 3-cocycle $\omega\in\on{H}^3(G,\k^\times)$.
      %We will drop $\omega$ in $\VG^\omega$ when it is trivial.
  \item Let $\on{rep}_\k(G)$ be the category of finite dimensional representations of $G$ (over $\k$
  with $\on{char}\k\nmid|G|$).
\end{itemize}
\end{example}

Let $\KG(\Fu)$ denote the Grothendieck group of a fusion category $\Fu$. Then $\KG(\Fu)$ is a fusion ring (a unital based ring of finite rank)
with addition given by the direct sum and multiplication given by the tensor product.
By abuse of notation, we still write $x\in\KG(\Fu)$ for the isomorphism class of $x\in\Fu$.
The isomorphism classes of simple objects in $\Fu$ form a basis of $\KG(\Fu)$.

\begin{definition}\cite[Prop.~3.3.6]{EGNO}
Define \emph{Frobenius-Perron dimension}
\[
    \FP\colon \KG(\Fu) \to \RR,
\]
sending any $x\in\KG(\Fu)$ to the maximal non-negative eigenvalue of the matrix of left multiplication by $x$ on $\KG(\Fu)$.
Note that this map extends to $\CC$ and equips $\CC$ with a $\KG(\Fu)$-structure.
Denote by $\CC_{\Fu}$ the complex plane regarded as a $\KG(\Fu)$-module.
\end{definition}

\begin{example}
Let $q$ be a formal parameter and consider the fraction field of the ring %$\CC(q)$
of complex polynomials of $q$. The \emph{$m$-th quantum integer} is defined to be
\[
    [m]_q\colon= \frac{ q^{m}-q^{-m} }{q-q^{-1}}.
\]
When specializing $q=e^{i\pi/h}$ for $h\in\ZZ_+$, we obtain a complex number, denoted by $[m]_h$.

The \emph{Temperley-Lieb-Jones category} $\TLJ_h$ at $q=e^{i\pi/h}$ is a fusion category over $\CC$ generated by simple objects, denoted by their Frobenius-Perron dimensions (also known as the \emph{quantum dimensions})
\[\tlj{h}{1},\tlj{h}{2},\cdots,\tlj{h}{h-1},\]
with fusion rules given by
\[
    \tlj{h}i\otimes\tlj{h}j\cong
    \begin{cases}
        \tlj{h}{|i-j|+1}\oplus \tlj{h}{|i-j|+3}\oplus\cdots\oplus \tlj{h}{i+j-1},
            & \text{if $i+j\leq h$}; \\
        \tlj{h}{|i-j|+1}\oplus \tlj{h}{|i-j|+3}\oplus\cdots\oplus \tlj{h}{2h-(i+j)-1},
            &\text{if $i+j> h$}.
    \end{cases}
\]
Note that $(\tlj{h}{i})^{\star}\cong{}^\star\tlj{h}{i}\cong\tlj{h}{i}$ for all $i$.
%For simplicity, we will use the even part $\TLJ_v^\circ$ of $\TLJ_v$ later,
%which consists of $\tlj{h}{m}$ with odd $m$, when $v$ is odd.
%When $v$ is even, we will write $\TLJ_v^\circ=\TLJ_v$.
\end{example}

%=========================================================
\paragraph{\textbf{Fusion actions}}\

\begin{definition}
Let $\C$ be an additive category and $\Fu$ a fusion category.
We say there is a \emph{$\Fu$-action} on $\C$ provided that there is an additive monoidal functor $\Psi:\Fu\to\End(\C)$.
If $\C$ carries additional structures (e.g. abelian, triangulated, etc.),
objects in $\End(\C)$ are required to preserve such structures.

\end{definition}
For simplicity, we will let $x\mapsto \Psi_x, x\in\Fu$ and $f\mapsto \Psi_f, f\in \mathrm{Mor}_{\Fu}$ denote the corresponding $\Fu$-action.
Note that the $\Fu$-action naturally makes $\KG(\C)$ a $\KG(\Fu)$-module.

%=========================================================
\subsection{Fusion-weighted species}\
%=========================================================

Recall that a folding on a quiver $Q$ (or its underlying graph) is a group action $G$ on $Q$
(i.e. there is a group homomorphism $G\to\Aut Q$).
The folded quiver $\spec=Q/G$ is a quiver with
\begin{itemize}
  \item $G$-orbits of $Q_0$ as vertices,
  \item $G$-orbits of $Q_1$ as edges.
\end{itemize}
When the folding is nice,
say there is no double arrow in $\spec$ and each arrow $\bfa$ is covered by a subquiver $Q(\bfa)=G^{-1}(\bfa)$ of $Q$ of type ADE,
one gives the weight $h_{Q(\bfa)}$ to the edge $\bfa$,
where $h$ denotes the Coxeter number (associated to an ADE quiver).

\begin{example}[Folding of finite type quivers]\label{ex:folding}\
  \begin{itemize}
    \item
        $Q$ is of type $A_{2n-1}$ and $\spec$ is of type $C_n$ with $G=C_2$.
    \[
    \begin{tikzcd}[every arrow/.append style={no head},column sep=1pc,row sep=.1pc]
        &\bullet \ar[r]& \bullet \ar[r]& \cdots \ar[r]&\bullet\ar[dr]&  {}\ar[dd,red,<->,bend left=60]\\
        A_{2n-1}:&&&&&\circ      \\
        &\bullet \ar[r]& \bullet \ar[r]& \cdots \ar[r]&\bullet\ar[ur]&  {}
    \end{tikzcd}
    \qquad
    \begin{tikzcd}[every arrow/.append style={no head},column sep=1pc]\\
        C_n:&      \bullet \ar[r]& \bullet \ar[r]& \cdots \ar[r]&\bullet\ar[r,"4"]& \circ\\
    \end{tikzcd}
    \]
    \item
        $Q$ is of type $D_{n+1}$ and $\spec$ is of type $B_n$ with $G=C_2$.
    \[
    \begin{tikzcd}[every arrow/.append style={no head},column sep=1pc,row sep=.1pc]
        &&&&&\bullet \ar[dd,red,<->,bend left=60]\\
        D_{n+1}:&\circ \ar[r]& \circ \ar[r]& \cdots \ar[r]&\circ\ar[dr]\ar[ur]      \\
        &&&&&\bullet
    \end{tikzcd}
    \qquad
    \begin{tikzcd}[every arrow/.append style={no head},column sep=1pc]\\
        B_n:&      \circ \ar[r]& \circ \ar[r]& \cdots \ar[r]&\circ\ar[r,"4"]& \bullet\\
    \end{tikzcd}
    \]
    \item
        $Q$ is of type $E_6$ and $\spec$ is of type $F_4$ with $G=C_2$.
    \[
    \begin{tikzcd}[every arrow/.append style={no head},column sep=1pc,row sep=.1pc]
        &&&\bullet\ar[r]&\bullet \ar[dd,red,<->,bend left=60]\\
        E_{6}:&\circ \ar[r] & \circ \ar[dr]\ar[ur]      \\
        &&&\bullet\ar[r]&\bullet
    \end{tikzcd}
    \qquad
    \begin{tikzcd}[every arrow/.append style={no head},column sep=1pc]\\
        F_4:&      \circ \ar[r]& \circ \ar[r,"4"]& \bullet \ar[r]& \bullet\\
    \end{tikzcd}
    \]
    \item
        $Q$ is of type $D_4$ and $\spec$ is of type $G_2$ with $G=C_3$.
    \[
    \begin{tikzcd}[every arrow/.append style={no head},column sep=1pc,row sep=.1pc]
        &&\bullet \ar[dd,red,<->,bend left=60]\\
        D_{4}:&\circ \ar[r]\ar[dr]\ar[ur] & \bullet       \\
        &&\bullet
    \end{tikzcd}
    \qquad
    \begin{tikzcd}[every arrow/.append style={no head},column sep=1pc]\\
        G_2:&      \circ \ar[r,"6"]& \bullet\\
    \end{tikzcd}
    \]
  \end{itemize}
\end{example}

The representation and associated categories of folded quiver can be understood by \emph{species}.
We want to generalize such a notion to a fusion-weighted version.
%\begin{definition}
%Suppose that there is a $\Fu$-action on (the additive completion of) the path category $\k Q$ of a quiver $Q$.
%Then the folded quiver $\spec=Q/$\Fu$ is
%\end{definition}

\begin{definition}
An \emph{algebra} $A=(A,\mu_A,\eta_A)$ in a fusion category $\Fu$ is an object $A\in\Fu$ with morphisms
\[\mu_A:A\otimes A\to A\quad \text{and}\quad \eta_A:\mathbbm{1}\to A\]
such that
\begin{itemize}
\item $\mu_A\circ(\mathrm{id}_A\otimes \mu_A)=\mu_A\circ(\mu_A\otimes\mathrm{id}_A),$
\item $\mu_A\circ(\eta_A\otimes \mathrm{id}_A)=\mathrm{id}_A=\mu_A\circ(\mathrm{id}_A\otimes \eta_A).$
\end{itemize}

%{\color{red}Draw picture? Define simple algebras (an algebra has no nontrivial two-sided ideals.)}

A right $A$-module $M=(M,\mu_{M,A})$ in $\Fu$ is an object $M\in\Fu$ with morphism
\[\mu_{M,A}:M\otimes A\to M\]
such that
\[\mu_{M,A}\circ(\mathrm{id}_M\otimes \mu_{A})=\mu_{M,A}\circ(\mu_{M,A}\otimes \mathrm{id}_A)\quad\text{and}\quad\mu_{M,A}\circ(\mathrm{id}_M\otimes\eta_A)=\mathrm{id}_M.\]

Denote by $\mod_{\Fu}(A)$ the category of all right $A$-modules in $\Fu$. Note that $\mod_{\Fu}(A)$ is semisimple and denote by $\Sim_{\Fu} A$ the set of (isomorphism classes of) simple objects in $\mod_{\Fu}(A)$.

A left $A$-module $M=(M,\mu_{A,M})$ in $\Fu$ is an object $M\in\Fu$ with morphism
\[\mu_{A,M}:A\otimes M\to M\]
such that
\[\mu_{A,M}\circ(\mathrm{id}_A\otimes \mu_{A,M})=\mu_{A,M}\circ(\mu_{A}\otimes \mathrm{id}_M)\quad\text{and}\quad\mu_{A,M}\circ(\eta_A\otimes\mathrm{id}_M)=\mathrm{id}_M.\]
Let $A=(A,\mu_A,\eta_A)$ and $B=(B,\mu_B,\eta_B)$ be two algebras in $\Fu$. An $A$-$B$-bimodule $M=(M,\mu_{A,M},
\mu_{M,B})$ in $\Fu$ is an object $M\in\Fu$ such that $(M,\mu_{A,M})$ is a left $A$-module and
$(M,\mu_{M,B})$ is a right $B$-module satisfying that
\[\mu_{A,M}\circ(\mathrm{id}_A\otimes \mu_{M,B})=\mu_{M,B}\circ(\mu_{A,M}\otimes \mathrm{id}_B).\]
\end{definition}

For a right $A$-module $M=(M,\mu_{M,A})$ and a left $A$-module $N=(N,\mu_{A,N})$ in $\Fu$,
define the tensor product $M\otimes_{A}N$ to be the kernel of the morphism
\[\mu_{M,A}\otimes \mathrm{id}_N-\mathrm{id}_M\otimes \mu_{A,N}\colon M\otimes A\otimes N\to M\otimes N.\]

%====================================================================================

For any algebra $A$ and any object $L$ in $\Fu$, each $L\otimes{}_-$ can be viewed as an endofunctor of any $\mod_{\Fu}(A)$. Let $A,B$ be two algebras and $M$ an $A$-$B$-bimodule in $\Fu$. Then the functor
\[{}_-\otimes_{A}M\colon\mod_{\Fu}(A)\to\mod_{\Fu}(B)\]
commutes with all functors $L\otimes{}_-, L\in\on{Obj}(\Fu)$ up to isomorphism. Since $\Fu$ is fusion category, then $L\otimes{}_-$ is an exact endofunctor of $\mod_{\Fu}(A)$. Note that for any right $A$-module $X$ we have the exact sequence
\[\begin{tikzcd}[column sep=3pc]
L\otimes (X\otimes_{A} M)
    \arrow[hookrightarrow, r,"\mathrm{id}_L\otimes\varphi"]  &L\otimes X\otimes A\otimes M \arrow[rrrr,"\mathrm{id}_L\otimes\mu_{X,A}\otimes \mathrm{id}_M-\mathrm{id}_L\otimes\mathrm{id}_X\otimes \mu_{A,M}"]
    &&&&L\otimes X\otimes M,
\end{tikzcd}\]
where $X\otimes_{A} M$ together with $\varphi$ is the kernel of $\mu_{X,A}\otimes \mathrm{id}_M-\mathrm{id}_X\otimes \mu_{A,M}$. By the universal property, there exists a unique isomorphism
\[\phi_{L,X,M}:(L\otimes X)\otimes_{A} M\to L\otimes (X\otimes_{A} M).\]
%
%Let $B_i, 1\leq i\leq n$ be a family of algebras and $M_{j}, 1\leq j<n$ any $B_{j}$-$B_{j+1}$-bimodules in $\Fu$.
%Then we have the isomorphism
%\[\phi_{L,X,}\colon:\left(\left(L\otimes X\right)\otimes_{B_1} M_{1}\right)\otimes_{B_2} M_2\to
%L\otimes \left(\left(X\otimes_{B_1} M_{1}\right)\otimes_{B_2} M_2\right)\]
%====================================================================================

\begin{definition}
A \emph{$\Fu$-weighted specie} (or $\Fu$-specie for short) $\spec=(\spec,\wt)$ consists of a quiver $\spec$ and
a $\Fu$-valued function on $\spec$
\begin{gather}\label{eq:wt}
    \wt\colon \spec\to\on{Obj}(\Fu)
\end{gather}
such that
\begin{itemize}
  \item for any $\bfi\in\spec_0$, each $\wt(\bfi)$ is a semisimple algebra in $\Fu$;
  \item for any $\bfa:\bfi\to\bfj$ in $\spec_1$, each $\wt(\bfa)$ is a $\wt(\bfi)$-$\wt(\bfj)$-bimodule.
\end{itemize}
\end{definition}
Note that each $\wt(\bfa)$ gives an additive functor
\[
    \WT_\bfa={}_-\otimes_{\wt(\bfi)}\wt(\bfa)\colon \mod_{\Fu}(\wt(\bfi))\to\mod_{\Fu}(\wt(\bfj)).
\]
Let $s$ and $t$ be the source and target functions of $\spec$. For any path $\bfp=\bfa_n\cdots\bfa_2\bfa_1$, where each $\bfa_i$ lies in $\spec_1$, we can define the functor \[\WT_\bfp\colon=\WT_{\bfa_n}\circ\left(\circ\cdots\left(\WT_{\bfa_2}\circ\WT_{\bfa_1}\right)\right)\colon\mod_{\Fu}(\wt(s(\bfa_1)))\to\mod_{\Fu}(\wt(t(\bfa_n)).\]
For each $\bfi\in\spec_0$, denote by $\mathbf{e}_\bfi$ the corresponding trivial path. Set $\WT_{\mathbf{e}_\bfi}$ to be the identity functor of $\mod_{\Fu}(\wt(\bfi))$. By definition, it is clear that $\WT_{\bfp_2}\circ\WT_{\bfp_1}=\WT_{\bfp_2\bfp_1}$ whenever the path $\bfp_2\bfp_1$ makes sense.

\begin{definition}
A \emph{representation} $\wtV$ of a $\Fu$-specie $\spec=(\spec,\wt)$ consists of
\begin{itemize}
    \item for each $\bfi\in\spec_0$, an object $\wtV_{\bfi}\in \mod_{\Fu}(\wt(\bfi))$ and
    \item for each $\bfa:\bfi\to\bfj$ in $\spec_1$, a morphism $\wtV_{\bfa}:\wtV_{\bfi}\otimes_{\wt(\bfi)} \wt(\bfa)\to \wtV_{\bfj}$ in $\mod_{\Fu}(\wt(\bfj))$.
\end{itemize}
A morphism $\wtf:\wtV\to \wtW$ of representations of a $\Fu$-specie $\spec=(\spec,\wt)$ is a collection of morphisms
\[\wtf_{\bfi}:\wtV_{\bfi}\to \wtW_{\bfi}\]
in $\mod_{\Fu}(\wt(\bfi))$, where $\bfi\in\spec_0$, such that the diagram
\[
\begin{tikzcd}[column sep=large]
\wtV_{\bfi}\otimes_{\wt(\bfi)} \wt(\bfa)
    \arrow[r,"\wtV_{\bfa}"] \arrow[d,"\wtf_{\bfi}\otimes_{\wt(\bfi)}\mathrm{id}_{\wt(\bfa)}"'] &\wtV_{\bfj}\arrow[d,"\wtf_{\bfj}"]  \\
\wtW_{\bfi}\otimes_{\wt(\bfi)} \wt(\bfa)\arrow[r,"\wtW_{\bfa}"]& \wtW_{\bfj}
\end{tikzcd}
\]
commutes for each $\bfa:\bfi\to\bfj$ in $\spec_1$.
\end{definition}
All representations of a $\Fu$-specie $\spec=(\spec,\wt)$ together with morphisms between those representations form a category, denoted by $\mathrm{rep}(\spec)$. The direct sum, kernel, cokernel in $\mathrm{rep}(\spec)$ can be defined component-wisely. Therefore, the category $\mathrm{rep}(\spec)$ is abelian.

There is a $\Fu$-action on $\mathrm{rep}(\spec)$ (viewed as a left module category over $\Fu$), i.e.  there exists an additive monoidal functor
\[\Psi:\Fu\to\mathcal{E}\text{nd}(\mathrm{rep}(\spec)),\]
which sends any object $X$ in $\Fu$ to the endofunctor $X\otimes{}_-$ of $\mathrm{rep}(\spec)$ and any morphism $f:X\to Y$ in $\Fu$ to the natural transformation $f\otimes{}_-\colon X\otimes{}_-\to Y\otimes{}_-$. In fact, the functor $X\otimes{}_-$ is given by sending any object $\wtV$ to $X\otimes\wtV$ with
\[
    (X\otimes\wtV)_{\bfi}\colon= X\otimes\wtV_{\bfi}\quad\text{and}\quad
        (X\otimes\wtV)_{\bfa}\colon=
            (\mathrm{id}_{X}\otimes\wtV_{\bfa})\circ\phi_{X,V_{s(\bfa)},\wt(\bfa)}
\]
for any $\bfi\in\spec_0$ and $\bfa\in\spec_1$,
and sending any morphism $f:\wtV\to\wtW$ to $X\otimes f:X\otimes\wtV\to X\otimes\wtW$ with
\[(X\otimes f)_{\bfi}\colon=\mathrm{id}_{X}\otimes f_{\bfi}\]
for any $\bfi\in\spec_0.$

\begin{definition}\label{def:unfolding}
The \emph{unfolded quiver} $Q=\unfold(\spec,\wt)$ of a $\Fu$-specie $(\spec,\wt)$ is defined to be a quiver which
\begin{itemize}
  \item has vertex set given by $\bigsqcup_{\bfi\in\spec_0} \Sim_{\Fu}\wt(\bfi)$, and
  \item for each $\bfa:\bfi\to\bfj$ in $\spec_1$, $m_{\bfa,X,Y}$ arrows from $X\in\Sim_{\Fu}\wt(\bfi)$ to $Y\in\Sim_{\Fu}\wt(\bfj)$, where
    \[  m_{\bfa,X,Y}=\dim\Hom_{\mod_{\Fu}(\wt(\bfj))}( X\otimes_{\wt(\bfi)}\wt(\bfa),Y).\]
\end{itemize}
We call
\begin{gather}\label{eq:f:Q2Q}
    \wtF\colon Q \xrightarrow{/\Fu_\wt} \spec,
\end{gather}
a $\Fu$-weighted folding.
\end{definition}
Similar to the proof of \cite[Theorem~3.1]{H1}, for any $\Fu$-specie $\spec=(\spec,\wt)$
one can define an exact functor
\[\Theta:\mathrm{rep}(\spec)\to \mathrm{rep}(Q),\]
and show that $\Theta$ gives an equivalence of abelian categories.

%Assume that $\Theta'$ is a quasi-inverse of $\Theta$. In fact, we can chose the natural isomorphisms
%\[\alpha:\Theta\circ\Theta'\cong \mathrm{Id}_{\mathrm{rep}(Q)}\quad\text{and}\quad\beta:
%\Theta'\circ\Theta\cong\mathrm{Id}_{\mathrm{rep}(\spec)}\]
%such that $(\Theta,\Theta')$ is an adjoint pair. Using the zigzag relation of the adjoint pair,
%it is easy to check that
%\[\Psi'\colon=\Theta\circ\Psi({}_-)\circ\Theta'\colon\Fu\to\mathcal{E}\text{nd}(\mathrm{rep}(Q))\]
%is an additive monoidal functor, i.e., there is a $\Fu$-action on $\mathrm{rep}(Q)$ induced by $\Psi$. Since $\Fu$ is fusion, each functor $\Psi'(X)$ sends projective objects to projective objects in $\mathrm{rep}(Q)$, cf. \cite[Exercise~7.5.2]{EGNO}. Thus, $\Psi'$ can be restricted to an additive monoidal functor from $\Fu$ to $\mathcal{E}\text{nd}(\mathrm{proj}(Q))$, i.e., there is a $\Fu$-action on $\mathrm{proj}(Q)$.

%\begin{example}[McKay quiver]
%Let $\Fu=\VG$ for a finite group $G$, $\spec$ be the 1-loop quiver (with vertex 1 and arrow $l$) and $\wt(l)=V\in\VG$. {\color{red} what is $\wt(1)=$? Is it the identity object in $\Fu$.}
%Then the associated unfolded quiver is the Mckay quiver of $G$ with respect to $V$.
%When $G$ is a subgroup of $\on{SL}(2,\CC)$ and $V$ is its natural representation,
%the unfolded quiver is the double (i.e. replacing each edge with a 2-cycle) of an ADE Dynkin diagram.
%\end{example}

\begin{example}[Group action as fusion action]\
Let $G$ be a finite abelian group and $\Fu=\VG$.
We will use $g\in G$ to denote the corresponding simple object in $\VG$.
Then a $G$-action on $Q$ with $G$-orbit $\spec$ can be regarded as a $\Fu$-weighted folding as follows.
\begin{itemize}
  \item For each $\bfi\in\spec$, let $\wt(\bfi)=\bigoplus_{g\in \on{stab}(\bfi) } g$ and then
    $\mod_{\Fu} (\wt(\bfi))= \on{\mathsf{vec}}^{G/\on{stab}(\bfi)}(\k)$, where $\on{stab}(\bfi)$ denotes the (pointwise) stabilizer of the orbit $\bfi$.
  \item For each $\bfa\colon\bfi\to\bfj$, let $\wt(\bfa)=\wt(\bfi)\otimes_{X_{\bfi\bfj}}\wt(\bfj)$,
  where $X_{\bfi\bfj}=\bigoplus_{g\in \on{stab}(\bfa)} g$ and $\on{stab}(\bfa)$ denotes the (pointwise) stabilizer of the orbit $\bfa$. Note that $\on{stab}(\bfa)=\on{stab}(\bfi)\bigcap\on{stab}(\bfj)$.
    Hence $\wt(\bfa)$ is a $\wt(\bfi)$-$\wt(\bfj)$-bimodule.
\end{itemize}
Then the unfolded quiver is $Q$.

For instance, take the folding from $D_4$ to $G_2\colon \2\xrightarrow{\;\bfa\;} \1$ in \Cref{ex:folding}.
We have $G=C_3=\{1,\omega,\omega^2\}$ for $w=e^{2\bfi\pi/3}$,
$\wt(\1)=1\oplus\omega\oplus\omega^2$, $\wt(\mathbf{2})= 1$ and $\wt(\bfa)=1\oplus\omega\oplus\omega^2$ as a $\wt(\2)$-$\wt(\1)$-bimodule.
The unfolded quiver is a $D_4$ quiver:
\[\begin{tikzcd}
    1\ar[dr]\\ \omega \ar[r] & 1+\omega+\omega^2 \\ \omega^2\ar[ur]
\end{tikzcd}\]
\end{example}

\begin{example}\label{ex:H1}[TLJ-weighted species, \cite{H1}]\

Let $\spec$ be an edge-valued quiver with $\vv\colon \spec_1\to\ZZ_{\ge3}$.
Define
\[
    \TLJ_\vv\colon=\bigboxtimes_{3<h\in\on{im}(\vv)} \TLJ_h,
\]
where $\boxtimes$ means taking the Deligne's tensor product (i.e. the fusion version of direct product, cf. \cite[\S~1.4]{H1}).
The tensor product in $\TLJ_\vv$ is defined component wise. The category $\TLJ_\vv$ is also a fusion category.
%{\color{red} Cite some references that the Deligne tensor product of fusion categories is still a fusion category?}
Denote by $\tlj{h}{k}^\vv$ the simple object in $\TLJ_\vv$ with $\tlj{h}{k}$ in the $\TLJ_h$ component and $\tlj{?}{1}$ in all other components, where $?\in\on{im}(\vv)\setminus\{h\}$.
The unit for $\TLJ_\vv$ is $\mathbbm{1}$ with $\tlj{h}{1}$ in the $\TLJ_h$ component for any $3<h\in\on{im}(\vv)$.
It is clear that $\mathbbm{1}$ is a simple algebra and $\mod_{\Fu}(\mathbbm{1})=\TLJ_\vv$.

%For simplicity, we will take the even part version, i.e. $\TLJ_\vv^\circ\colon=\bigboxtimes_{v\in\on{im}(\vv)} \TLJ_v^\circ$ as \cite{H1} did.
Define a $\TLJ_\vv$-valued function $\wt\colon\spec\to \on{Obj}(\TLJ_\vv)$ on $\spec$ by
\[\begin{array}{rcl}
    \bfi\in\spec_0 & \mapsto & \mathbbm{1},\\
    \bfa\in\spec_1 & \mapsto & \begin{cases}
                                 \tlj{\vv(\bfa)}{2}^\vv, & \mbox{if $\vv(\bfa)>3$,}  \\
                                 \mathbbm{1}, & \mbox{otherwise}.
                               \end{cases}
  \end{array}
\]
Then $(\spec, \wt)$ is a $\TLJ_\vv$-weighted specie.
\end{example}
Note that our convention is slightly different from \cite{H1}. cf \S~6.2 there.

%\begin{example}\label{ex:KO}[$\hat{\mathfrak{sl}}_2$-CFT species, \cite{KO}]
%Let $\spec$ be $I_2(h)$ and $Q$ a (connected) Dynkin quiver with Coxeter number $h_Q=h$.
%Define a $\TLJ_h$-valued function $\wt_Q\colon\spec\to \on{Obj}(\TLJ_h)$ on $\spec$ by
%\[
%    \bfi\in\spec_0  \mapsto
%        \begin{cases}
%          \tlj{h}{1}, & \mbox{$Q$ is of type $A_{h-1}$}  \\
%          \tlj{h}{1}+\tlj{h}{h-1}, & \mbox{$Q$ is of type $D_{h/2+1}$}  \\
%          \tlj{12}{1}+\tlj{12}{7}, & \mbox{if $h=12$ and $Q$ is of type $E_6$}\\
%          \tlj{18}{1}+\tlj{18}{9}+\tlj{18}{17}, & \mbox{if $h=18$ and $Q$ is of type $E_7$}\\
%          \tlj{30}{1}+\tlj{30}{11}+\tlj{30}{19}+\tlj{30}{29}, & \mbox{if $h=30$ and $Q$ is of type $E_8$}.
%        \end{cases}
%\]
%and $\bfa\in\spec_1\mapsto \tlj{h}{2}$.
%Type $A_{h-1}$ case coincide with the TLJ-weighted one.
%\end{example}

\begin{example}[Weighted (un)folding of finite type quivers]\label{ex:w-folding}\

We give a list of weighted foldings from ADE quivers to Coxeter-Dynkin quivers
that we will study later, cf. \cite{DT1,DT2}.
%\paragraph{\textbf{Unfolding TLJ-weighted species}}\

In \Cref{ex:H1}, when $\spec$ is a finite type Coxeter diagram,
the unfolded $Q$ are shown in \Cref{fig:BCEI} and \Cref{fig:H234}.

\begin{figure}[htbp]\centering
\begin{tikzpicture}
\foreach \j in {0,-1} {
\draw[thick] (3.6,\j) -- (4.8,\j-1)
            (4.8,\j) -- (3.6,\j-1);}
\draw[] (2.4,1)node{$A_{2\bfn-1}\bigsqcup D_{\bfn+1}$}
        (2.4,-4.6)node{$B_{\bfn}=C_{\bfn}$};
\draw[ultra thick] (2.8,-4) -- (4.8,-4)
                (0,-4) -- (2,-4);
\foreach \j in {0,-1,-2,-4} {
    \draw[thick] (2.4,\j) node {$\cdots$}
                (0,\j) -- (1.9,\j)
                (2.8,\j) -- (3.6,\j);
    \foreach \i in {0,1,3,4} {
        \filldraw[white,fill=white] (1.2*\i,\j) circle (0.15);\filldraw[fill=white] (1.2*\i,\j) \ww;
    }}
\draw[very thick,->,>=stealth,red]
    (2.4,-2.4) to node[left] {$f$} (2.4,-3.2);
\draw (4.2,-3.6) node {$4$};
%=========================================================
\begin{scope}[shift={(6,0)}]
\draw[] (1.8,1)node{$E_6^{\bigsqcup2}$}
        (1.8,-4.6)node{$F_4$};
\foreach \j in {0,-1} {
    \draw[thick] (1.2,\j) -- (2.4,\j-1);
    \draw[thick] (2.4,\j) -- (1.2,\j-1);}
\draw[ultra thick] (0,-4) -- (3.6,-4);
\foreach \j in {0,-1,-2,-4} {
    \draw[thick] (0,\j) -- (1.2,\j);
    \draw[thick] (2.4,\j) -- (3.6,\j);
    \foreach \i in {0,1,2,3} {
        \filldraw[white,fill=white] (1.2*\i,\j) circle (0.15);\draw[](1.2*\i,\j) \ww;
    }}
\draw[very thick,->,>=stealth,red]
    (1.8,-2.4) to node[left] {$f$} (1.8,-3.2);
\draw[thick] (1.8,-3.7)node {$4$};
\end{scope}
%=========================================================
\end{tikzpicture}
\caption{Weighted folding for type $B_{\bfn}/C_{\bfn}$ and $F_4$}
\label{fig:BCEI}
\end{figure}
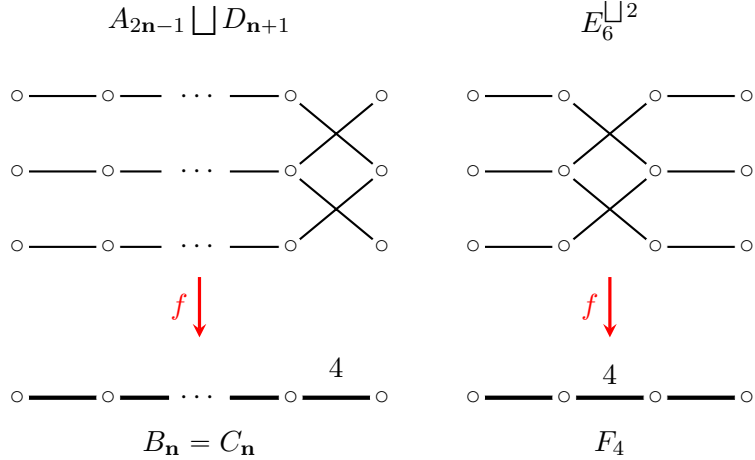

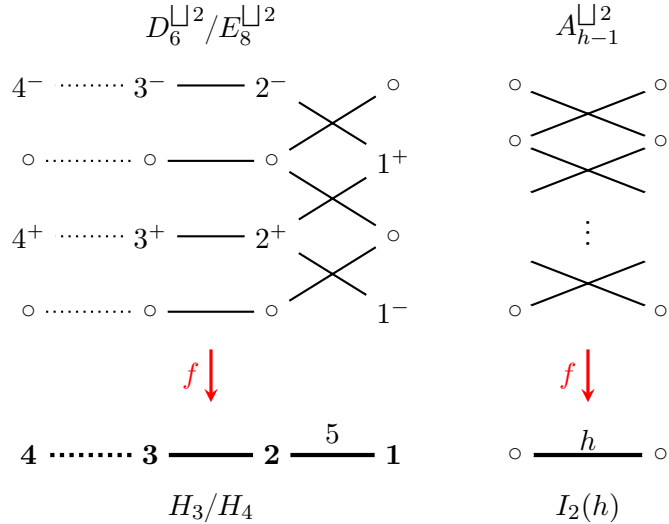
\begin{figure}[htpb]\centering
\begin{tikzpicture}[yscale=1,xscale=1.6]
\foreach \j in {2,3,4}{
  \draw[] (-\j,0) node (x\j) {$\j^+$}
    (-\j,2) node (y\j) {$\j^-$}
    (-\j,-2.9) node (z\j) {$\mathbf{\j}$};
}  \draw(-1,-1) node (x1) {$1^-$}
    (-1,1) node (y1) {$1^+$}
    (-1,-2.9) node (z1) {$\1$};

\foreach \j in {2,3,4}{
  \draw[] (-\j,1) node (t\j) {$\circ$}
    (-\j,-1) node (s\j) {$\circ$};
}  \draw(-1,0) node (s1) {$\circ$}
    (-1,2) node (t1) {$\circ$};

\draw[dotted,thick] (x3)to(x4) (y3)to(y4) (s4)to(s3) (t4)to(t3);
\draw[dotted,ultra thick] (z3)to(z4);
\draw[thick] (x1)to(x2)to(y1)to(y2) (x2)to(x3) (y2)to(y3) (s2)to(s3) (t2)to(t3)
    (t1)to(t2)to(s1)to(s2);
\draw[ultra thick](z1)to node[above]{$5$}(z2)to(z3);
\draw[very thick,->,>=stealth,red](-2.5,-1.5) to node[left] {$f$} (-2.5,-2.2);
\draw(-2.5,2.8) node {$D_6^{\bigsqcup2}/E_8^{\bigsqcup2}$}
    (-2.5,-3.6)node{$H_3/H_4$};

\begin{scope}
\draw[] (.6,2.8)node{$A_{h-1}^{\bigsqcup2}$}
        (.6,-3.6)node{$I_2(h)$};
\foreach \j in {2,1.25,-0.25} {
    \draw[thick] (0,\j) -- (1.2,\j-0.75)
            (1.2,\j) -- (0,\j-0.75);}
\draw[ultra thick] (0,-2.9) -- (1.2,-2.9)
            (0.6,0.15) node {$\vdots$};
\foreach \j in {2,1.25,-1,-2.9} {
    \foreach \i in {0,1} {
        \filldraw[white,fill=white] (1.2*\i,\j) circle (0.15);\draw[](1.2*\i,\j) \ww;
    }}
\foreach \j in {0.5,-0.25} {
    \foreach \i in {0,1} {
        \filldraw[white,fill=white] (1.2*\i,\j) circle (0.15);\draw[](1.2*\i,\j) node[white]{$\bullet$}node[white]{$\circ$};
    }}
\draw[very thick,->,>=stealth,red]
    (0.6,-1.5) to node[left] {$f$} (0.6,-2.2);
\draw[thick] (0.6,-2.7) node {$h$};
\end{scope}
\end{tikzpicture}
\caption{Weighted folding type $H_3/H_4$ and $I_2(h)$}
\label{fig:H234}
\end{figure}

\end{example}

%=========================================================
\subsection{Functorial action}\label{sec:fun.action}
%=========================================================
\paragraph{\textbf{Induced actions for quivery categories}}\

Let $\Fu(\spec)$ denote the additive category whose objects are the same as in $\bigsqcup_{\bfi\in\spec_0}\mod_{\Fu}(\wt(\bfi))$ and whose morphisms are given by
\begin{equation}\label{eq:Hom1}
  \Hom_{\Fu(\spec)}(X, Y)\colon=
  \bigoplus_{{\text{path }}\bfp\colon\bfi\rightsquigarrow\bfj}
    \Hom_{\mod_{\Fu}(\wt(\bfj))} ( \WT_\bfp(X), Y)
\end{equation}
for indecomposable objects $X\in\mod_{\Fu}(\wt(\bfi))$ and $Y\in\mod_{\Fu}(\wt(\bfj))$ with
composition defined component wise in the following way:
for any
$f\in\Hom_{\mod_{\Fu}(\wt(\bfj))} ( \WT_{\bfp_1}(X), Y)$ and
$g\in\Hom_{\mod_{\Fu}(\wt(\bfl))} ( \WT_{\bfp_2}(Y), Z)$ with $\bfp_1$ being a path from $\bfi$ to $\bfj$ and $\bfp_2$ a path from $\bfk$ to $\bfl$,
we define
\[g\diamond f\colon=\begin{cases} g\circ\WT_{\bfp_2}(f)\in \Hom_{\mod_{\Fu}(\wt(\bfl))} ( \WT_{\bfp_2\bfp_1}(X), Z),
& \text{if $\bfj=\bfk$}; \\
0, &\text{otherwise}.
\end{cases}
\]

One sees that $\Fu(\spec)$ is in fact equivalent to $\on{Proj} \k Q$ as additive categories.
Note that there is a $\Fu$-action on $\Fu(\spec)$, i.e.,  there is an additive monoidal functor
\begin{equation}\label{eq:Psi}
    \Psi:\Fu\to\mathcal{E}\text{nd}(\Fu(\spec))
\end{equation}
by left tensoring.
%which sends
%\begin{itemize}
%    \item any object $L$ in $\Fu$ to an additive endofunctor $\Psi_L$ of $\Fu(\spec)$ uniquely defined by $\Psi_L(X)\colon=L\otimes X$ for any $X\in \Fu(\spec)$ and
%\[ \Psi_L(f)\colon=(\mathrm{id}_L\otimes f)\circ\phi_{U,X,\bfp}\]
%for any $f\in\Hom_{\mod_{\Fu}(\wt(\bfj))} ( \WT_{\bfp}(X), Y)$ with
%$X\in\mod_{\Fu}(\wt(\bfi)), Y\in\mod_{\Fu}(\wt(\bfj))$ being indecomposable and $\bfp=\bfa_n\cdots\bfa_2\bfa_1$ a path from $\bfi$ to $\bfj$, where
%\[\phi_{U,X,\bfp}:=(\otimes_{\wt(s(\bfa_n))}\mathrm{id}_{\wt(\bfa_n)})\circ\phi_{L,\WT_{\bfa_{n-1}\cdots \bfa_2\bfa_1}(X),\wt(\bfa_n)} ;\]
%    \item any morphism $\alpha:U\to V$ in $\Fu$ to the natural transformation $\alpha\otimes{}_-\colon U\otimes{}_-\to V\otimes{}_-$. Denote by $\phi_{U,X,\bfp}$ the unique isomorphism
%\[\phi_{U,X,\bfp}\]
%\Qy{Need $U\otimes{}_-$ commutes with $\WT_{\bfa}$???????}
%\end{itemize}
Then we have
\begin{equation}\label{eq:rep Q}
    \rep(Q)=\on{Fun}(\Fu(\spec),\on{Vect}({\k}) ),
\end{equation}
which admits an induced $\Fu$-action, also denoted by $\Psi$.
Moreover, by \cite[Thm.~8.15]{Toen} (cf. \cite[Thm.~4.6]{Ke2}),
\begin{equation}\label{eq:DbQ}
    \D^b(Q)=\on{rep}(\Fu(\spec),\D^b(\k)).
\end{equation}
Hence, we obtain an induced $\Fu$-action.

Finally, as the $\Fu$-action commutes with $[1]$ and $\tau$,
we also have the $\Fu$-action on $\C_m(Q)$.

\paragraph{\textbf{Induced actions for Calabi-Yau completion}}\

Next, we construct the Calabi-Yau double $\CYGQ$ of $\Fu(\spec)$ following \cite{Ke1}.
Fix an integer $N$ and assume that $\spec$ (i.e. its arrow set) is $\ZZ$-graded, with degree function $\deg$.
That is, the category $\CYGQ$ with the same object set $\Fu(\spec)$ and
\begin{equation}\label{}
  \Hom_{ \CYGQ }(X, Y)\colon=\Hom_{\Fu(\spec)}(X, Y)\oplus\Hom_{\Fu(\spec)}(Y, X)^*
\end{equation}
for indecomposable objects $X$ and $Y$.
Now, we make $\CYGQ$ a dg category:
\begin{itemize}
  \item the morphisms in \eqref{eq:Hom1} inherit the degree of the path $\bfp$;
  \item if $f\in \Hom_{\Fu(\spec)}(X, Y)$ is not identity, then $\deg f^*=2-N-\deg f$;
  \item $\deg(\id_X^*)=1-N$;
  \item the degree of the differential $\diff$ is 1;
  \item the nontrivial differential is given by
  \[
    \diff(\id_X^*)=\sum_{ f } [f,f^*],
  \]
  where the sum is over a basis of \eqref{eq:Hom1}.
\end{itemize}

The additive category $\CYGQ$ is equivalent to $\on{Proj}\GNQ$, cf. \Cref{sec:quivery}.
Hence as \eqref{eq:DbQ} we have,
\begin{equation}\label{eq:pvd}
    \pvd(\GNQ)=\on{rep}_{}\left(\CYGQ,\D^b(\k)\right)
\end{equation}
with an induced $\Fu$-action, which is $\D_N(Q)$ in \eqref{eq:D_NQ}.
There is also an induced $\Fu$-action on $\per(\GNQ)$
as it consists of twisted objects over $\on{Proj}\GNQ$.

In summary, there is a $\Fu$-action, denoted by $\Psi$,
on various categories associated to $Q=\unfold(\spec,\wt)$.

%=========================================================
%=========================================================
\section{Fusion-stable structures}
%=========================================================
%=========================================================
\subsection{Fusion-stable (or fusion-invariant) structures}\
%=========================================================

Recall that our setting is that
let $\C$ be an additive category with a fusion action $\Fu$, i.e. there is an additive monoidal functor $\Psi:\Fu\to\mathcal{E}\mathrm{nd}(\C)$.

An additive subcategory $\C_0\subset\C$ is $\Fu$-stable if the functor $\Psi:\Fu\to\mathcal{E}\mathrm{nd}(\C)$ can be restricted to a functor $\Psi:\Fu\to\mathcal{E}\mathrm{nd}(\C_0)$, where the latter is automatically additive and monoidal.
Equivalently, for any $x\in\Fu$ and $C\in\C_0$, the object $\Psi_x(C)\in\C_0$.

\begin{definition}
Suppose that $\C$ is an abelian or a triangulated category with a fusion action $\Fu$
(Note that $\Psi_x$ is required to be exact, for any $x\in\Fu$, in this case).
A torsion pair $(\hT,\hF)$ on $\C$ is $\Fu$-stable if both $\hT$ and $\hF$ are $\Fu$-stable.
Similarly for other tilting structures, i.e.
\begin{itemize}
  \item $\Fu$-stable t-structures,
  \item $\Fu$-stable hearts,
  \item $\Fu$-stable $m$-CTS.
\end{itemize}
\end{definition}

A forward tilting $\mu^\sharp_\hF\colon\h\to\h^\sharp$ between two $\Fu$-stable hearts
must be with respect to a $\Fu$-stable torsion pair,
as $\hT=\h^\sharp\cap\h$ and $\hF=\h^\sharp[-1]\cap\h$.
Similarly, a forward mutation $\mu_{\hY\setminus\hX}^\sharp\colon\hY\to\hY^\sharp$
between two $\Fu$-stable $m$-CTS must be with respect to a $\Fu$-stable subcategory $\hY\setminus\hX$ of $\hY$.

For any object $M$ of $\C$, denote by $\Fu(M)$ the $\Fu$-stable additive subcategory generated by all direct summands of $\Psi_x(M)$, for all $x\in\Fu$, called the $\Fu$-orbit of $M$.
Similarly when $M$ is a set of objects or a subcategory.

\begin{definition}
A $\Fu$-indecomposable ($\Fu$-ind for short) subcategory $\C_0\in\C$ is the $\Fu$-orbit of some indecomposable object in $\C$.
When $\C$ is abelian, we say a $\Fu$-ind subcategory is $\Fu$-simple if it is the $\Fu$-orbit of some simple in $\C$.
\end{definition}

%=========================================================
\paragraph{\textbf{Exchange graphs}}\

\begin{definition}
Let $\D$ be a triangulated category with a fusion action $\Fu$.
\begin{itemize}
  \item A forward tilting $\mu^\sharp_\hF\colon\h\to\h^\sharp$ is called $\Fu$-simple if
    $\hF=\Fu(S)$ is $\Fu$-simple in $\h$.
  \item The $\Fu$-stable exchange graph of hearts of $\D$ is the graph whose
  vertices are $\Fu$-stable hearts and whose edges are $\Fu$-simple forward tilting.
  Denote it by $\EG(\D)^\Fu$.
  \item A forward $m$-cluster mutation $\mu_{\hY\setminus\hX}^\sharp\colon\hY\to\hY^\sharp$
  is $\Fu$-ind, if $\hY\setminus\hX=\Fu(X)$ is a $\Fu$-ind subcategory.
  \item The $\Fu$-stable $m$-cluster exchange graph is the graph whose vertices are $\Fu$-stable $m$-CTS
  and whose arrows are $\Fu$-ind mutations. Denote it by $\CEG_m(\D)^\Fu$.
\end{itemize}
As before, we use silting for $\infty$-cluster.
\end{definition}

\begin{remark}
Note that $\EG(\D)^\Fu$ is a subset of $\EG(\D)$ but NOT a subgraph.
The edges in $\EG(\D)^\Fu$ correspond to certain paths in $\EG(\D)$.
For instance, \cite[Fig.~1]{CQ} demonstrate $\EG(C_2)$ (vertices underlined) sitting inside $\EG(A_3)$.
The edges of $\EG(C_2)$ correspond to directed paths in the oriented associahedron $\EG(A_3)$.
Similarly for $\CEG_m(\D)^\Fu$.
\end{remark}

\begin{lemma}\label{lem:G-simple}
Let $\mu^\sharp_{\Fu(S)}\colon\h\to\h^\sharp_{\Fu(S)}$ be a $\Fu$-simple forward tilting.
If $\Fu(S)$ is rigid (i.e. $\Ext^1$-vanishing) and the indecomposable objects in $\Fu(S)$
are simples $S_1,\ldots,S_l$ in $\h$,
then $\mu^\sharp_{S_i}$ are pairwise commutative, for $1\le i\le l$,
and
\begin{equation}\label{eq:G-simple}
    \h^\sharp_{\Fu(S)}= \mu^\sharp_{\Fu(S)}(\h)= \big( \prod_{i=1}^l \mu^\sharp_{S_i} \big) (\h).
\end{equation}
\end{lemma}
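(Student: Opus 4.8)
The plan is to prove that when $\Fu(S)$ is rigid with indecomposable summands that are simples $S_1,\ldots,S_l$ of $\h$, the individual simple tilts $\mu^\sharp_{S_i}$ commute pairwise and their composite realizes the $\Fu$-simple tilt $\mu^\sharp_{\Fu(S)}$. The key structural observation is that $\Fu(S)=\Add(S_1\oplus\cdots\oplus S_l)$, so the torsion-free part of the defining torsion pair for $\mu^\sharp_{\Fu(S)}$ is $\hF=\Fu(S)$, and the corresponding forward tilt rotates exactly the subcategory $\Add\{S_1,\ldots,S_l\}$ by $[1]$. I would first record that, since each $S_i$ is simple in $\h$, the subcategory $\Fu(S)$ is a semisimple abelian subcategory, and the torsion pair $(\hT,\hF)=(\Fu(S)^{\perp_\h},\Fu(S))$ is well-defined precisely because rigidity plus semisimplicity guarantee that $\Fu(S)$ is closed under extensions trivially and that $\Hom(\hT,\hF)=0$.

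**Commutativity of the simple tilts.** Next I would analyze a single simple tilt $\mu^\sharp_{S_i}$. By definition it sends $\h$ to the heart with torsion pair $\h^\sharp=\Add(S_i)[1]\perp {}^{\perp}\!\Add(S_i)$, replacing $S_i$ by $S_i[1]$ while keeping the other simples (up to the recomputation of how they sit in the new heart). The crucial point is the rigidity hypothesis $\Ext^1(\Fu(S),\Fu(S))=0$: because $\Ext^1(S_i,S_j)=0=\Ext^1(S_j,S_i)$ for all $i\ne j$, the simple tilt at $S_i$ does not disturb the status of $S_j$ as a simple object of the intermediate heart, and vice versa. I would verify that after tilting at $S_i$, the object $S_j$ remains simple in $\mu^\sharp_{S_i}(\h)$ and that its associated tilt is again an honest simple forward tilt. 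Concretely, I expect to show that $\mu^\sharp_{S_i}$ and $\mu^\sharp_{S_j}$ both act by rotating disjoint, mutually $\Ext^1$-orthogonal simples, so the two operations are supported on ``independent'' pieces of the heart and hence commute; this is where the argument resembles the well-known fact that simple HRS-tilts at an $\Ext^1$-orthogonal antichain of simples commute.

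**Identifying the composite with the $\Fu$-tilt.** With commutativity in hand, I would compute $\big(\prod_{i=1}^l \mu^\sharp_{S_i}\big)(\h)$ directly by iterating the torsion-pair description: tilting successively at $S_1,\ldots,S_l$ rotates each $S_i$ to $S_i[1]$, and because of the $\Ext^1$-orthogonality no cross-extension terms appear to spoil the torsion pairs at intermediate stages. The resulting heart admits the torsion pair $\hF[1]\perp\hT$ with $\hF=\Add\{S_1,\ldots,S_l\}=\Fu(S)$, which is exactly the defining torsion pair for $\mu^\sharp_{\Fu(S)}$. Hence the composite equals $\h^\sharp_{\Fu(S)}$, giving \eqref{eq:G-simple}.

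**Main obstacle.** I expect the technical heart of the proof to be the inductive claim that at each intermediate stage the remaining simples $S_j$ (for the not-yet-tilted indices) stay simple and $\Ext^1$-orthogonal, so that the next simple tilt is still well-defined and still commutes. The danger is that a simple tilt can, in general, create new extensions among the images of the other simples; the rigidity hypothesis $\Ext^{1}(\Fu(S),\Fu(S))=0$ is exactly what rules this out, but making this precise requires carefully tracking how $\Ext^1(S_i,S_j)$ controls whether $S_j$ survives intact under $\mu^\sharp_{S_i}$. Once this orthogonality is shown to persist through the iteration, both the commutativity and the identification of the composite follow formally from the torsion-pair definitions, so I would invest the bulk of the writing in that persistence lemma and treat the final torsion-pair bookkeeping as routine.
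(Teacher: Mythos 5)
Your proposal is correct and takes essentially the same route as the paper, whose entire proof is a citation of the square/commutativity relation between simple tilts at $\Ext^1$-orthogonal simples (\cite[Lem.~6.1]{QW}, \cite[Rem.~2.13]{KQ2}) together with the observation that the composite is the tilt at the torsion-free class $\Add\Fu(S)$ --- precisely the persistence-of-orthogonality argument you propose to write out in detail. (One cosmetic slip: under the paper's convention $\Hom(\hT,\hF)=0$, the torsion class paired with $\hF=\Fu(S)$ is ${}^{\perp_\h}\Fu(S)$, not $\Fu(S)^{\perp_\h}$.)
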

\begin{proof}
This follows from repeatedly using the square/commutative relation between two simple tilting
(see \cite[Lem.~6.1]{QW} or \cite[Rem.~2.13]{KQ2}).
Equivalently, the tilting in \eqref{eq:G-simple} is with respect to simultaneously tilting all simples in $\Fu(S)$,
or the torsion free class that equals $\Add\Fu(S)$.
\end{proof}

By simple-projective duality, we have the following in a similar way.
\begin{lemma}\label{lem:G-Ind}
Let $\mu^\sharp_{\Fu(Z)}\colon\hY\to\hY^\sharp_{\Fu(Z)}$ be a $\Fu$-ind forward mutation.
Denote by $\{Z_1,\ldots,Z_l\}$ the indecomposable objects in $\Fu(Z)$.
If $\Fu(Z)$ is discrete, i.e. $\Hom(Z_i,Z_j)$=0 for any $i\ne j$),
then $\mu^\sharp_{Z_i}$ are pairwise commutative, for $1\le i\le l$,
and
\[
    \hY^\sharp_{\Fu(Z)}= \mu^\sharp_{\Fu(S)}(\hY)= \big( \prod_{i=1}^l \mu^\sharp_{Z_i} \big) (\hY).
\]

\end{lemma}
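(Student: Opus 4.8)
The plan is to mirror the proof of \Cref{lem:G-simple} but on the silting/cluster side, with discreteness of $\Fu(Z)$ playing the role that rigidity of $\Fu(S)$ played there. Write $\hX=\hY\setminus\Fu(Z)$, so that $\Fu(Z)=\Add\{Z_1,\dots,Z_l\}$, and for each $i$ let $f_i\colon Z_i\to\widehat{X_i}$ be a minimal left $\hX$-approximation with $Z_i^\sharp=\Cone(f_i)$; by definition $\mu^\sharp_{\Fu(Z)}(\hY)=\Add(\hX\cup\{Z_1^\sharp,\dots,Z_l^\sharp\})$. The goal is to show that mutating the $Z_i$ one at a time, in any order, always uses exactly these cones, so that the composite agrees with the simultaneous mutation.

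First I would check that each individual ind mutation $\mu^\sharp_{Z_i}$ is computed by the same datum $f_i$. For the single mutation the relevant approximation is a minimal left $(\hY\setminus\Add Z_i)$-approximation, taken against a category that also contains the other $Z_j$; but since $\Hom(Z_i,Z_j)=0$ for $j\ne i$, no copy of $Z_j$ can occur as a summand of a left-minimal approximation object of $Z_i$ (the corresponding component is forced to be zero and would split off). Hence this approximation reduces to $f_i$, and $\mu^\sharp_{Z_i}(\hY)=\Add\big((\hY\setminus\Add Z_i)\cup\{Z_i^\sharp\}\big)$ replaces $Z_i$ by $Z_i^\sharp$ while keeping all $Z_j$, $j\ne i$.

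The heart of the argument is to show that this datum is stable under the intermediate mutations. After performing some of the mutations we have replaced a subset of the $Z_i$ by the corresponding $Z_i^\sharp$; to mutate a remaining $Z_j$ I must verify that $f_j$ is still a minimal left approximation with respect to the enlarged category containing these $Z_i^\sharp$. Applying $\Hom(Z_j,-)$ to the triangle $Z_i\xrightarrow{f_i}\widehat{X_i}\to Z_i^\sharp\to Z_i[1]$ and using $\Hom(Z_j,Z_i)=0$ (discreteness) together with $\Ext^1(Z_j,Z_i)=0$ (rigidity of the $m$-CTS) yields an isomorphism $\Hom(Z_j,\widehat{X_i})\xrightarrow{\sim}\Hom(Z_j,Z_i^\sharp)$ induced by $\widehat{X_i}\to Z_i^\sharp$. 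Thus every map $Z_j\to Z_i^\sharp$ lifts to a map $Z_j\to\widehat{X_i}$ which, since $\widehat{X_i}\in\hX$, factors through $f_j$; hence $f_j$ remains a left approximation against $Z_i^\sharp$ as well. Left minimality of $f_j$ is intrinsic, so $f_j$ is still the minimal approximation and mutating $Z_j$ produces the same cone $Z_j^\sharp$.

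Putting these together, mutating all the $Z_i$ in any order yields $\Add(\hX\cup\{Z_1^\sharp,\dots,Z_l^\sharp\})=\mu^\sharp_{\Fu(Z)}(\hY)$, which gives both the pairwise commutativity of the $\mu^\sharp_{Z_i}$ and the product formula. I expect the main obstacle to be precisely this stability step: ensuring that the new objects $Z_i^\sharp$ introduced by earlier mutations do not force a later approximation to acquire additional components, and that left-minimality is preserved throughout. Conceptually this is the ``square/commutative relation'' between mutations dual to the one used in \Cref{lem:G-simple}, and the discreteness hypothesis is exactly what makes the relevant off-diagonal $\Hom$'s vanish. An alternative, more formal route would be to transport \Cref{lem:G-simple} across the simple-projective duality identifying $\mu^\sharp_{Z_i}$ with the simple tilt $\mu^\sharp_{S_i}$ at the dual simple, but the direct approximation argument above seems cleaner and self-contained.
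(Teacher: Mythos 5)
Your argument is correct, but it takes a different route from the paper. The paper disposes of this lemma in one line: it invokes simple--projective duality to transport \Cref{lem:G-simple}, whose own proof consists of citing the known square/commutative relation between simple tilts (from \cite[Lem.~6.1]{QW} and \cite[Rem.~2.13]{KQ2}). You instead prove the required commutation relation directly on the silting/cluster side by approximation theory: you check that the minimal left $\hX$-approximation $f_j$ of $Z_j$ computes the single ind mutation (discreteness kills any $Z_k$-components of the approximation target), and that $f_j$ survives the intermediate mutations because $\Hom(Z_j,Z_i)=0$ together with $\Ext^1(Z_j,Z_i)=0$ forces $\Hom(Z_j,\widehat{X_i})\cong\Hom(Z_j,Z_i^\sharp)$, so maps into the new summands $Z_i^\sharp$ still factor through $f_j$, and left minimality is intrinsic to the morphism. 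This is a sound and self-contained verification of exactly the ``square relation'' that the paper treats as a black box; what it buys is independence from the cited references and from the duality identification, at the cost of redoing in detail what the paper regards as standard. Your closing remark correctly identifies the paper's actual (shorter) route as the duality transport of \Cref{lem:G-simple}.
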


%=========================================================
\subsection{Fusion-stable tilting theory for weighted acyclic quivers}\
%=========================================================

Similar to group-stable case in \cite{CQ},
one can easily generalize results on tilting theory to fusion-stable version.
For instance, we state the generalization of results in \cite{KQ1} without much of effort.

\begin{theorem}\label{thm:KQ}
Suppose we have a $\Fu$-weighted folding
$\wtF\colon Q \xrightarrow{/\Fu_\wt} \spec$ and $Q$ is acyclic.
By taking the $\Fu$-stable part, we have the following.
\begin{itemize}
  \item $N\in\ZZ_{\ge2}$, the simple-projective duality holds, i.e.
    \begin{gather}
        \EGp(\D_N(Q))^\Fu\cong\SEGp(\per_N(Q))^\Fu.
    \end{gather}
%  \item For $m\in\ZZ_{\ge1}$, the $m$-cluster exchange graph $\CEG_m^\Fu(Q)$
%  can be obtained from the interval $\EGp[\h_Q[1],\h_Q[m]]^\Fu$ in
%  $\EGp(\D_\infty(Q))^\Fu$ by adding certain edges.
    \item For $m\in\ZZ_{\ge1}$, $\EGp(\D_{m+1}(Q))^\Fu$ is a covering of $\CEG_{m}(Q)^\Fu$.
  \item The $2$-cluster exchange graph $\CEG_2(Q)^\Fu$ is exactly
  the double of the interval $\EGp[\h,\h[1]]^\Fu$ in $\EGp(\D_{3}(Q))^\Fu$
  for any $\h\in\EGp(\D_{3}(Q))^\Fu$ , i.e. by adding an opposite edge for each existing edge.
\end{itemize}
\end{theorem}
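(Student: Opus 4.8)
The plan is to obtain each of the three statements by \emph{equivariantising} its non-equivariant counterpart, which for acyclic $Q$ is established in \cite{KQ1} (building on \cite{AI12}), and then restricting to the $\Fu$-stable loci. Recall $\D_N(Q)=\pvd(\GNQ)$, $\per_N(Q)=\per(\GNQ)$ and $\C_m(Q)\cong\per_{m+1}(Q)/\D_{m+1}(Q)$. By \Cref{sec:fun.action} each of these three categories carries an induced $\Fu$-action by exact functors $\Psi_x$, and these actions are mutually compatible: $\Psi_x$ commutes with $[1]$, with $\tau$, and with the Verdier quotient $\per_{m+1}(Q)\to\C_m(Q)$. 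The whole argument rests on threading these compatibilities through the constructions of \cite{KQ1}.

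First I would handle the vertices. The simple-projective duality of \cite{KQ1} matches a silting object $\bigoplus_i P_i$ of $\per_N(Q)$ with the heart of $\D_N(Q)$ whose simples form the dual basis $\{S_i\}$, and it matches each silting mutation $\mu^\sharp_{P_i}$ with the simple tilting $\mu^\sharp_{S_i}$. Because $\Psi_x$ is exact and commutes with the $\Hom$-pairings defining this duality, it sends $\Fu$-stable silting to $\Fu$-stable hearts and conversely; likewise the quotient functor sends $\Fu$-stable silting of $\per_{m+1}(Q)$ to $\Fu$-stable $m$-CTS of $\C_m(Q)$, its fibres being orbits under $[1]$ that are $\Fu$-stable as soon as one member is. Thus in each statement the vertex sets on the two sides are in $\Fu$-equivariant bijection, restricting to the $\Fu$-stable vertices.

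The core of the proof, and the main obstacle, is the edges, precisely because (as in the Remark) the $\Fu$-stable graphs are \emph{not} subgraphs: a single $\Fu$-simple or $\Fu$-ind edge is realised as a commuting product of ordinary edges through \Cref{lem:G-simple} and \Cref{lem:G-Ind}. To invoke those lemmas I must first check their hypotheses hold along the way, namely that every occurring $\Fu$-orbit $\Fu(S)$ of a simple (resp. $\Fu(Z)$ of an indecomposable silting summand) is rigid and discrete. In the weighted-folding setting this is structural at the initial heart: the fibres of $\wtF$ consist of vertices of $Q$ lying in one $\Fu$-orbit with no arrows among them, so the associated simples have vanishing $\Ext^1$ and no $\Hom$ between distinct members; the delicate point is to propagate this along the exchange graph, which I would do using that the $\Fu$-action is exact and intertwines tilting with mutation under the duality. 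Granting this, \Cref{lem:G-simple} writes $\mu^\sharp_{\Fu(S)}=\prod_i\mu^\sharp_{S_i}$ and \Cref{lem:G-Ind} writes $\mu^\sharp_{\Fu(Z)}=\prod_i\mu^\sharp_{Z_i}$; since the non-equivariant duality already matches each factor $\mu^\sharp_{S_i}$ with the corresponding $\mu^\sharp_{P_i}$, it carries one commuting product to the other, matching $\Fu$-simple edges with $\Fu$-ind edges. This proves the first statement, and applying the same factorisation under the $\Fu$-equivariant quotient functor together with the non-equivariant covering yields the covering of the second statement.

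For the third statement I would specialise to $N=3$, $m=2$. The non-equivariant doubling of \cite{KQ1} identifies $\CEG_2(Q)$ with the double of the interval $\EGp[\h,\h[1]]$, the two opposite orientations of each $2$-cluster edge corresponding to a forward simple tilting and its matching backward tilting within one $[1]$-period. Restricting the $\Fu$-equivariant version of this identification to the $\Fu$-stable loci, and using that every $\Fu$-ind $2$-cluster mutation likewise comes with its opposite (again a commuting product of ordinary mutations), shows that $\CEG_2(Q)^\Fu$ is exactly the double of $\EGp[\h,\h[1]]^\Fu$. The one point to verify is that this interval is independent of the chosen $\h\in\EGp(\D_3(Q))^\Fu$, which follows from $\Fu$-equivariance of the $[1]$-action together with the covering established in the second statement.
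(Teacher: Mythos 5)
The paper offers no proof of this theorem at all: it is stated with only the preceding remark that, ``similar to the group-stable case in \cite{CQ}, one can easily generalize results on tilting theory to fusion-stable version \dots without much of effort.'' Your outline is precisely the argument that remark gestures at --- transport the non-equivariant statements of \cite{KQ1} through the induced $\Fu$-actions of \Cref{sec:fun.action}, match $\Fu$-stable vertices on both sides, and match the edges by decomposing each $\Fu$-simple tilting and $\Fu$-ind mutation into a commuting product of ordinary ones via \Cref{lem:G-simple} and \Cref{lem:G-Ind} --- so in approach you agree with what the paper intends. The one substantive point you raise and then defer (``Granting this\dots'') is the verification that the hypotheses of those two lemmas, rigidity of $\Fu(S)$ and discreteness of $\Fu(Z)$, persist at every vertex of the exchange graph rather than only at the initial heart; the paper does not address this either, and it is the only place where the routine equivariantization genuinely requires an argument (in the acyclic weighted-folding setting it can be extracted from the fact that the $\Fu$-action permutes the simples within an orbit and that the relevant Ext/Hom spaces are themselves $\Fu$-module-compatible, but this deserves to be written down). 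Subject to that caveat, which applies equally to the paper, your proposal is correct and follows the intended route.
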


%=========================================================
\subsection{Fusion-stable stability conditions}\
%=========================================================

Let $\C$ be an abelian or a triangulated category with a fusion action $\Fu$. Then the associated additive, exact, monoidal functor $\Psi:\Fu\to\mathcal{E}\mathrm{nd}(\C)$ can be decategorified to a ring homomorphism
\[\KG(\Psi):\KG(\Fu)\to \mathrm{End}(\KG(\C)),\]
that makes $\KG(\C)$ a $\KG(\Fu)$-module.

\begin{definition}
A stability condition $\sigma=(Z,\hP_\RR)$ on a triangulated category $\D$ is $\Fu$-stable if
\begin{itemize}
  \item the slicing $\hP_\RR$ (i.e. each of its slice $\hP(\phi))$ is $\Fu$-stable and
  \item the central charge is $\Fu$-stable, i.e.
  $$Z\in\Hom_{\ZZ}(\KG(\D),\CC)^\Fu\colon=\Hom_{\KG(\Fu)}(\KG(\D),\CC_{\Fu}).$$
\end{itemize}
\end{definition}
Note that
\[
    \Hom_{\ZZ}(\KG(\D),\CC)^\Fu:=\{Z\in\Hom_{\ZZ}(\KG(\D),\CC)\mid
        Z([\Psi_x(Y)])=\FP(x)\cdot Z([Y]),\forall x\in \Fu\}
\]

We generalize the main result (Thm.~1.2) in \cite{B1} to the fusion-stable version,
to demonstrate our philosophy in the introduction.
Such a result is indeed expected by Heng \cite[Remark.~3.3.2]{H2}.
Note that similar result has been considered in \cite[Thm.~3.10]{IQ},
where the deformation theorem in \cite{B1} plays a crucial role in both proofs.

Recall two notions from \cite[\S~4]{B1}.
\begin{itemize}
  \item A \emph{quasi-abelian category} is an additive category with kernels and cokernels such that every pullback of a strict epimorphism is a strict epimorphism, and every pushout of a strict monomorphism is a strict monomorphism. (A morphism $f$ is \emph{strict} if the canonical map $\mathrm{coim}f\to\mathrm{im}f$ is an isomorphism.)
  \item A \emph{skewed stability function} on a quasi-abelian category $\hua{A}$ is a group homomorphism $Z\in\Hom_\ZZ( \KG(\hua{A}),\CC )$ such that there is a tilted half-plane
      $\mathbb{H}_\alpha=e^{\bfi \pi \alpha}\cdot\mathbb{H}\subset\CC$
      for some $\alpha\in\RR$ satisfying $Z(E)\in\mathbb{H}_\alpha$ for any object $0\ne E\in\hua{A}$.
\end{itemize}

\begin{theorem}\label{thm:stab}
Let $\D$ be a triangulated category with a fusion $\Fu$-action.
$\Stab(\D)^\Fu$ is a complex manifold with local coordinate
$Z\in\Hom_{\KG(\Fu)}(\KG(\D),\CC_{\Fu})$.
\end{theorem}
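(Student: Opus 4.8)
The plan is to deduce the statement from Bridgeland's deformation theorem \cite{B1}, by cutting out the $\Fu$-stable locus with the $\KG(\Fu)$-linearity condition on the central charge. First I would record that the target is a complex vector space: by definition $\Hom_{\KG(\Fu)}(\KG(\D),\CC_\Fu)=\Hom_\ZZ(\KG(\D),\CC)^\Fu$ is the intersection, over all $x\in\Fu$, of the $\CC$-linear conditions $Z([\Psi_x Y])=\FP(x)\cdot Z([Y])$ inside the finite-dimensional complex vector space $\Hom_\ZZ(\KG(\D),\CC)$; hence it is a linear subspace, which I denote $V$, and in particular a complex manifold. The forgetful map $\mathcal Z\colon\Stab(\D)\to\Hom_\ZZ(\KG(\D),\CC)$, $\sigma=(Z,\hP_\RR)\mapsto Z$, then restricts to $\mathcal Z^\Fu\colon\Stab(\D)^\Fu\to V$, where $\Stab(\D)^\Fu$ carries the subspace topology. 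Since $\mathcal Z$ is, by \cite{B1}, a local homeomorphism onto an open subset of $\Hom_\ZZ(\KG(\D),\CC)$, it is locally injective, so $\mathcal Z^\Fu$ is locally injective into $V$.

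The core of the proof is to upgrade this to the assertion that $\mathcal Z^\Fu$ is a local homeomorphism onto an open subset of $V$; granting this, $\Stab(\D)^\Fu$ inherits from $V$ the structure of a complex manifold with local coordinate $Z$, as claimed. Concretely, given $\sigma\in\Stab(\D)^\Fu$, Bridgeland's theorem supplies a ball $B_\sigma$ around $\sigma$ on which $\mathcal Z$ is a homeomorphism onto an open set $U\subseteq\Hom_\ZZ(\KG(\D),\CC)$; it then suffices to prove the key claim that every $\tau\in B_\sigma$ with $\mathcal Z(\tau)\in V$ is already $\Fu$-stable. Indeed, combined with the evident inclusion $\mathcal Z(\Stab(\D)^\Fu)\subseteq V$, this gives $\Stab(\D)^\Fu\cap B_\sigma=\mathcal Z^{-1}(V)\cap B_\sigma$, which $\mathcal Z$ identifies with the open subset $V\cap U$ of $V$.

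To prove the key claim I would use that each $\Psi_x$ is exact and multiplies central charges by the positive real number $\FP(x)$, together with the hypothesis that $\sigma$ is $\Fu$-stable, i.e.\ each $\Psi_x$ and (by rigidity of $\Fu$) each adjoint $\Psi_{{}^\star x},\Psi_{x^\star}$ preserves every slice $\hP(\phi)$ of $\sigma$. Writing $m_\rho(-)$ for the mass function of a stability condition $\rho$, slice preservation for $\sigma$ yields $m_\sigma(\Psi_x E)=\FP(x)\,m_\sigma(E)$ for all $E$ (apply $\Psi_x$ to the $\sigma$-HN filtration), while the Bridgeland metric bound $d(\sigma,\tau)<\epsilon$ gives $e^{-\epsilon}\le m_\tau(E)/m_\sigma(E)\le e^{\epsilon}$. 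For a $\tau$-semistable $E$ with $\mathcal Z(\tau)=Z'\in V$ these combine into the sandwich
\[
    \FP(x)\,m_\tau(E)=|Z'(\Psi_x E)|\le m_\tau(\Psi_x E)\le e^{2\epsilon}\,\FP(x)\,m_\tau(E),
\]
so that $\Psi_x E$ is ``almost $\tau$-semistable''.

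The main obstacle is precisely the passage from this approximate statement to genuine $\tau$-semistability of $\Psi_x E$, equivalently to the equality $m_\tau(\Psi_x E)=|Z'(\Psi_x E)|$: this is where the non-invertibility of $\Psi_x$ bites, and where one must invoke the support property as in \cite[\S~3.1]{IQ}. I expect to handle it by transporting a hypothetical destabilizing sub- or quotient-object of $\Psi_x E$ back through the exact adjoint $\Psi_{{}^\star x}$ (resp.\ $\Psi_{x^\star}$), which does preserve the $\sigma$-slicing, so as to produce a nonzero morphism violating the $\sigma$-$\Hom$-vanishing between slices once $\epsilon$ is small relative to the discreteness furnished by the support property; this forces the $\tau$-HN filtration of $\Psi_x E$ to be trivial. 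Finally, $\Psi_x$-stability of the slices $\hP(\phi)$ for $\phi\in[0,1)$ propagates to all $\phi$ because $\Psi_x$ commutes with the shift, completing the verification that $\tau$ is $\Fu$-stable and hence the theorem.
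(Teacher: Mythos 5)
Your overall framework matches the paper's: both arguments deform inside the linear subspace $\Hom_{\KG(\Fu)}(\KG(\D),\CC_{\Fu})$ and reduce, via the uniqueness in Bridgeland's deformation theorem, to the key claim that the deformed stability condition $\varsigma=(W,\hQ)$ obtained from an $\Fu$-stable $\sigma$ and an $\Fu$-stable central charge $W$ is itself $\Fu$-stable. The genuine gap is in your proof of that key claim. You correctly reduce to showing that $\Psi_x(E)$ is $\varsigma$-semistable whenever $E$ is, and you correctly identify adjunction through the duals as the tool, but the mechanism you propose --- transport a destabilizer of $\Psi_x(E)$ back through $\Psi_{{}^\star x}$ and contradict $\sigma$-Hom-vanishing ``once $\epsilon$ is small relative to the discreteness furnished by the support property'' --- does not close. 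The support property gives no discreteness of phases in general, and a $\varsigma$-destabilizing subobject may exceed the phase of $\Psi_x(E)$ by an arbitrarily small amount; after transporting back, the resulting nonzero morphism violates $\sigma$-Hom-vanishing only if that phase excess beats roughly $2\epsilon$, which nothing guarantees. Your mass sandwich likewise only yields ``almost semistable,'' as you yourself acknowledge.

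The paper closes this step by a finite maximality argument that needs no discreteness. Write $\Sim\Fu=\{x_1,\dots,x_m\}$ and $M_i=\Psi_{x_i}(M)$, and work in the quasi-abelian category $\hP(a,b)$ in which $M$ is $W$-semistable (part of Bridgeland's explicit description of $\hQ$). If some $M_i$ fails to be $W$-semistable, let $r$ be the maximum of the upper phases $\psi^+_\varsigma(M_i)$ over the finite set of simples, attained say by $M_m$ with first HN factor $A$ of phase $r$. Adjunction gives a nonzero map from $\Psi_{x_m^\vee}(A)$ to $M$; since $M$ is $W$-semistable of phase $\psi<r$ and $W$ is $\Fu$-stable, $\Psi_{x_m^\vee}(A)$ cannot be $W$-semistable, so it admits a semistable strict subobject $B$ of phase $r'>r$, which then embeds into $\Psi_{x_m^\vee}(\Psi_{x_m}(M))\cong\bigoplus_i M_i^{\oplus k_i}$. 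This forces the upper phase of that direct sum to be at least $r'>r$, contradicting the maximality of $r$. You would need to replace your discreteness step with this maximality-plus-decomposition argument (exploiting that $x^\vee\otimes x$ decomposes over $\Sim\Fu$) for the proof to go through.
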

\begin{proof}
We first recall some details of \cite[Thm.~7.1]{B1}.

Let $\epsilon$ be a small positive real number, say in $(0,1/8)$.
Given a stability condition $\sigma=(Z,\hP)\in\Stab(\D)$.
%, we will use $\phi_Z^\pm$ to denote the phase function.
Let $W\in\Hom_{\ZZ}(\KG(\D),\CC)$ be a central charge satisfying
\begin{equation}\label{eq:W}
    | W(M)-Z(M) |<\sin(\pi\epsilon)|Z(M)|
\end{equation}
for any $M$ in $\D$.
By \cite[Thm.~7.1 and Lem.~6.4]{B1}, there is a unique stability condition $\varsigma$ on $\D$ with central charge $W$ satisfying $d(\sigma,\varsigma)<\epsilon$, where the distance function on $\Stab$ is defined as in \cite[\S6]{B1}.
Let $\varsigma=(W,\hQ)$ and we will use $\psi^\pm_\varsigma$ to denote the phase function associated to $\varsigma$.
In fact, $\hQ$ can be explicitly defined as follows:
$\hQ(\psi)$ consists of those objects $M$ satisfying

\begin{enumerate}
  \item $M$ is $W$-semistable in the quasi-abelian category $\hP(a,b)$ for
  some real numbers $a, b$ such that
  \[\begin{cases}
      0<b-a<1-2\epsilon, \\
      [\psi-\epsilon,\psi+\epsilon]\subset(a,b).
    \end{cases}
  \]
  \item  $W(M)=m\cdot e^{\bfi \pi \psi}$ for some $m\in\RR_+$.
\end{enumerate}

Now let us come back to the proof of our fusion-stable version.
Suppose that $\sigma=(Z,\hP)$ is $\Fu$-stable.
Take any $W\in\Hom_{\KG(\Fu)}(\KG(\D),\CC_{\Fu})$ satisfying
\begin{equation}\label{eq:W2}
    | W(M)-Z(M) |<\sin(\pi\epsilon/2)|Z(M)|,
\end{equation}
which uniquely determines a stability condition $\varsigma=(W,\hQ)$ with $d(\sigma,\varsigma)<\epsilon/2$.
Then we only need to show that $\hQ$ (and hence $\varsigma$) is also $\Fu$-stable,
which implies that $\Fu$-stable subspace of $\Hom_{\ZZ}(\KG(\D),\CC)$ gives the local coordinates of $\Stab^\Fu(\D)$.
We will use the same strategy of the proof of \cite[Prop.~3.3.4]{H2}.

Notice that $W$ induces a skewed stability function on the quasi-abelian category $\hP(a,b)$.
Let $\Sim\Fu=\{x_1,\ldots,x_m\}$ and $M$ be any $\varsigma$-semistable object with phase $\psi$.
By construction of $\hQ(\psi)$, $M$ is $W$-semistable in some quasi-abelian category $\hP(a,b)$ as above.
We only need to show that $M_i\colon=x_i(M)$ is also $\varsigma$-semistable for any $1\le i\le m$,
or equivalently, $M_i$ satisfies $1^\circ\sim2^\circ$ above.

Since $\sigma$ and $W$ are both $\Fu$-stable, then $M\in\hP(a+\epsilon,b-\epsilon)$
implies that $M_i\in\hP(a+\epsilon,b-\epsilon)$ and
$W(M_i)=m_i\cdot e^{\bfi \pi \psi}$ for some $m_i\in\RR_+$.
The only less obvious thing to prove is the $W$-semistability of $M_i$ (in $\hP(a,b)$).

Suppose that some $M_i$ is not $W$-semistable.
As $d(\sigma,\varsigma)<\epsilon/2$, $M_i\in\hQ(a+\epsilon/2,b-\epsilon/2)$.
Let $r=\max\{\psi^+_\varsigma(M_i)\mid 1\le i\le m\}$ and then $r\in (\psi,b-\epsilon/2)$.
Without loss of generality, suppose that $M_m$ admits a HN-filtration with first factor $A\in\hQ(r)$.
So $A$ is $W$-semistable with a strict monomorphism $f_m: A \to M_m$ in $\hQ(a+\epsilon/2,b-\epsilon/2)$.
Denote by $x_m^\vee$ the (left/right) dual of $x_m$ and we have
\[
    \Hom( x_m^\vee(A), M )\cong\Hom( A,x_m(M) )\neq0
\]
by the adjunction property.
As $W$ is $\Fu$-stable, $W(x_m^\vee(A))$ has phase $r=\psi_\varsigma(A)$,
which is bigger than $\psi=\psi_\varsigma(M)$.
Since $M_m\in\hQ(a+\epsilon/2,b-\epsilon/2)$,
we know that $A\in\hQ(a+\epsilon/2,b-\epsilon/2)$ and
hence $A\in\hP(a,b)$ as well as $x_m^\vee(A)\in\hP(a,b)$.
Now in $\hP(a,b)$, $M$ is $W$-semistable and thus $x_m^\vee(A)$ is not $W$-semistable.
So there exists a $W$-semistable object $B$ with phase $r'>r$ and a strict monomorphism
$f:B\to x_m^\vee(A)$ in $\hP(a,b)$.
We then obtain a strict monomorphism
\[
    x_m^\vee(f_m)\circ f\colon B\to x_m^\vee(A) \to x_m^\vee(M_m)=x_m^\vee( x_m(M) ).
\]
Thus, $\psi^+_\varsigma( x_m^\vee( x_m(M) ) )\ge r'$.
However on the other hand, one has
\[
    x_m^\vee( x_m(M)=\bigoplus_{i=1}^m M_i^{\oplus k_i}
\]
for some $k_i\in\NN$, which implies that $\psi^+_\varsigma( x_m^\vee( x_m(M) ) )$ is at most $r$ (by definition of $r$).
There is the contradiction, which finishes the proof.
\end{proof}

To finish this section, we also give a generalization of \cite[Thm.~A]{QW} for the later use.

\begin{theorem}\label{thm:QW+}
Let $\Stab_0$ be a finite type component as in \eqref{eq:0}.
Then $\Stab_0^\Fu$ consists of finite type components, each of which is contractible.
\end{theorem}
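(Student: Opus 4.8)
The plan is to mirror the cell-decomposition argument of \cite[Thm.~A]{QW}, now carried out inside the $\Fu$-stable manifold $\Stab(\D)^\Fu$ furnished by \Cref{thm:stab}. First I would cut $\Stab_0^\Fu$ into cells. For $\h\in\EG_0$ set $U(\h)^\Fu\colon=U(\h)\cap\Stab(\D)^\Fu$. Since the heart of a $\Fu$-stable stability condition is itself $\Fu$-stable, and since $\sigma\in U(\h)$ forces $\h_\sigma=\h$, one gets $\Stab_0^\Fu=\bigsqcup_{\h\in\EG_0}U(\h)^\Fu$, where $U(\h)^\Fu$ is empty unless $\h$ is $\Fu$-stable. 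By \Cref{thm:stab} the $\Fu$-stable central charges form the complex-linear subspace $\Hom_{\KG(\Fu)}(\KG(\h),\CC_\Fu)\subset\Hom_\ZZ(\KG(\h),\CC)$, so
\[
  U(\h)^\Fu=\Hom_{\KG(\Fu)}(\KG(\h),\CC_\Fu)\cap\mathbb{H}^{\Sim\h}
\]
is the intersection of a linear subspace with the convex region $\mathbb{H}^{\Sim\h}$, hence convex and in particular contractible. Nonemptiness, where needed, follows by assigning every $\Fu$-orbit of simples a common phase together with Frobenius--Perron masses (a positive $\FP$-type dimension on $\KG(\h)$); in any case empty cells are harmless and may simply be discarded.

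Next I would determine how the cells glue. Because every $x\in\Fu$ acts exactly, preserving $\h$ and scaling the central charge by the positive real $\FP(x)$, all simples in a single $\Fu$-orbit share one phase throughout $U(\h)^\Fu$; a codimension-one wall of $U(\h)^\Fu$ is therefore reached exactly when the common phase of an entire $\Fu$-orbit $\Fu(S)$ hits the boundary of $\mathbb{H}$. Crossing such a wall is the simultaneous tilt of all simples in $\Fu(S)$, which by \Cref{lem:G-simple} is a single $\Fu$-simple tilting $\mu^\sharp_{\Fu(S)}\colon\h\to\h^\sharp_{\Fu(S)}$. Combining the cell structure of $\Stab_0$ from \cite[Thm.~A]{QW} with the manifold structure of \Cref{thm:stab}, one then checks that moving across this wall remains inside $\Stab(\D)^\Fu$ and lands in the adjacent cell $U(\h^\sharp_{\Fu(S)})^\Fu$. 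Thus the adjacency of the cells $\{U(\h)^\Fu\}$ is governed precisely by the $\Fu$-stable exchange graph $\EG(\D)^\Fu$, with walls indexed by the $\Fu$-stable torsion pairs of $\h$.

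The finiteness input comes for free from the hypotheses on $\EG_0$. Each $\h\in\EG_0$ is finite, hence has finitely many simples and so finitely many $\Fu$-orbits of simples; consequently $U(\h)^\Fu$ is a finite-dimensional cell with finitely many walls. Likewise $\h$ has finitely many torsion pairs, so \emph{a fortiori} finitely many $\Fu$-stable ones. Therefore each connected component of the full subgraph of $\EG(\D)^\Fu$ on those $\Fu$-stable hearts lying in $\EG_0$ is a connected collection $\EG_0^\Fu$ of $\Fu$-stable hearts, in each of which there are finitely many $\Fu$-simples and finitely many $\Fu$-stable torsion pairs; the corresponding union $\bigsqcup_{\h\in\EG_0^\Fu}U(\h)^\Fu$ is then a finite-type component in the $\Fu$-stable analogue of \eqref{eq:0}. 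Summing over these components exhibits $\Stab_0^\Fu$ as a disjoint union of finite-type components, each contractible by \cite[Thm.~A]{QW}.

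The hard part will be the gluing step: matching the analytic wall-crossing inside the manifold $\Stab(\D)^\Fu$ with the combinatorial $\Fu$-simple tiltings. One must verify that the only walls met while deforming a $\Fu$-stable $\sigma$ within $\Stab(\D)^\Fu$ are the $\Fu$-stable ones --- equivalently, that a wall-crossing cannot break $\Fu$-stability, and that each such crossing realizes an \emph{entire} $\Fu$-orbit being tilted at once rather than a single simple. This is exactly where \Cref{thm:stab} is indispensable, to guarantee we stay on the $\Fu$-stable manifold with the prescribed coordinates $Z\in\Hom_{\KG(\Fu)}(\KG(\D),\CC_\Fu)$, together with \Cref{lem:G-simple} to package each orbit tilt as one $\Fu$-simple tilting; the remaining bookkeeping is a routine transcription of the non-equivariant argument of \cite[Thm.~A]{QW}.
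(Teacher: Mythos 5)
Your proposal is correct and follows essentially the same route as the paper: both restrict the CW-ish stratification of \cite[Thm.~A]{QW} to the $\Fu$-stable locus, identifying the top cells as $U(\h)^\Fu\cong\mathbb{H}^{d}$ for $\Fu$-stable hearts $\h$ (with $\Sim\h$ grouped into $\Fu$-orbits sharing a common phase), matching walls with $\Fu$-simple tiltings via \Cref{lem:G-simple}, and observing that the finiteness hypotheses on $\EG_0$ are inherited. The paper leaves the wall-crossing bookkeeping at the same level of "apply the arguments of \cite{QW}" that you do, so no substantive difference remains.
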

\begin{proof}
The key of the contractibility in \cite[Thm.~A]{QW} is the CW-ish complex structure
(cf. the stratification in \cite[\S~3.1]{QW}).
Namely:
\begin{itemize}
  \item The top cells in $\Stab_0$ are (half open half closed) $U(\h)\cong\mathbb{H}^{\Sim\h}$ in \eqref{eq:0}.
  \item The closure of two top cells intersect at a $k$-codimension wall if and only if there is a tilting
  $\mu^\sharp_\hF\colon\h\to\h^\sharp$ between the corresponding hearts, where
  $\hF$ is generated by $k$ simples in $\h$.
\end{itemize}
Now, such a structure is inherited by $\Stab_0^\Fu$, where for any $\Fu$-stable heart $\h$,
$\Sim\h$ groups into $d=\dim\Hom_{\KG(\Fu)}(\KG(\D),\CC_{\Fu})$ sets,
each of which corresponds to a $\Fu$-simple (subcategory) in $\h$.
Then one has
\begin{itemize}
  \item The top cells in $\Stab_0^\Fu$ are (half open half closed) $U(\h)^\Fu\cong\mathbb{H}^d$.
  \item The closure of two top cells intersect at a $k$-codimension wall if and only if there is a tilting
  $\mu^\sharp_\hF\colon\h\to\h^\sharp$ between the corresponding hearts, where
  $\hF$ is generated by $k$ $\Fu$-simples in $\h$.
\end{itemize}
The other finiteness condition, that there are only finitely many torsion pairs in any $\h\in\EG_0$,
is also inherited. %, i.e. there are only finitely many $\Fu$-stable torsion pairs in any $\h\in\EG_0$.
Then applying the arguments in \cite{QW},
we extend contractibility result there to the $\Fu$-stable version.
More precisely, here is the outline of the proof:
\begin{itemize}
  \item There is a stratification of $\Stab_0^\Fu$ (cf. \cite[(4) in \S~3.1]{QW}),
  such that the walls between cells are precisely encoded by $\Fu$-stable tilting of hearts.
  \item The stratification is locally finite and closure-finite (\cite[Lem~4.4]{QW}).
  \item Thus $\Stab$ is a regular, totally normal CW-cellular stratified space (\cite[Prop~3.21]{QW}).
  \item So the proof of \cite[Thm~4.9]{QW} applies and $\Stab_0^\Fu$ is contractible.
\end{itemize}
\end{proof}

\begin{remark}
As shown in \Cref{thm:KQ}, \Cref{thm:stab} and \Cref{thm:QW+},
one should be able to generalize most statements about tilting/cluster/stability structures to a fusion-stable version, similar to the group-stable case.
\end{remark}

%=========================================================
\section{Applications}
%=========================================================
Throughout this section,
let $\spec$ be a finite-type weighted quiver, i.e. an oriented Coxeter graph listed in \Cref{fig:Cox}.
Suppose that we have a weighted folding \eqref{eq:f:Q2Q}
as one of cases in \Cref{ex:w-folding}.

%=========================================================
\subsection{Categorification of cluster exchange graphs}\
%=========================================================

Denote by $\uCEG(\spec)$ cluster exchange graph of $\spec$,
also known as the generalized associahedron in the sense of \cite{FR}.

\begin{example}
The cluster exchange graphs $\uCEG(\spec)$ of types $A_3, B_3/C_3$ and $H_3$ are shown in \Cref{fig:ABH}.
\end{example}

We show that the categorical construction of $\uCEG(\spec)$ given in \cite{DT1,DT2}
is naturally the fusion-stable cluster exchange graph,
which explain the relation between the works \cite{DT1,DT2} and \cite{H1}, as asked in \cite[\S~6]{H1}.

\begin{theorem}\label{thm:ceg}
Let $f$ be a finite weighted folding, listed in \Cref{ex:folding} or \Cref{ex:w-folding}.
Then there is a canonical identification between cluster exchange graphs
\[
    \uCEG(\spec)\cong\uCEG_2(Q)^\Fu
\]
\end{theorem}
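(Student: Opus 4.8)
The plan is to build the isomorphism at the level of vertices first, and then upgrade it to an isomorphism of graphs by matching the two notions of mutation, using the weighted folding $\wtF\colon Q\xrightarrow{/\Fu_\wt}\spec$ as the organizing device. Since $Q$ is a simply-laced finite-type (ADE) quiver, the $2$-cluster category $\C_2(Q)$ carries a cluster structure whose basic $2$-cluster tilting objects biject with the clusters of the finite-type cluster algebra of type $Q$, and $\uCEG_2(Q)$ is the type-$Q$ generalized associahedron. Using the induced $\Fu$-action of \Cref{sec:fun.action}, I would first observe that a $2$-CTS $\hY$ is $\Fu$-stable exactly when its set of indecomposable summands is a disjoint union of $\Fu$-orbits. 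The folding then supplies a canonical bijection between $\Fu$-stable $2$-CTS and clusters of type $\spec$: the indecomposable summands of a $\Fu$-stable cluster partition into $\Fu$-orbits indexed by the vertices $\bfi\in\spec_0$ (through $\Sim_{\Fu}\wt(\bfi)$), so there are exactly $|\spec_0|$ orbits, matching the cardinality of a cluster of type $\spec$.

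For the edges, the crucial input is \Cref{lem:G-Ind}: a $\Fu$-ind mutation $\mu^\sharp_{\Fu(Z)}$ mutates all indecomposables of the orbit $\Fu(Z)=\{Z_1,\dots,Z_l\}$ simultaneously, and the individual mutations $\mu^\sharp_{Z_i}$ commute pairwise. Under the vertex bijection this composite is precisely a single cluster mutation of the type-$\spec$ cluster at the vertex $\bfi$ indexing $\Fu(Z)$, and conversely every mutation in $\uCEG(\spec)$ lifts to a $\Fu$-ind mutation. The bookkeeping of directed versus undirected edges (the ``doubling'') is then handled exactly as in the third bullet of \Cref{thm:KQ}, which identifies $\CEG_2(Q)^\Fu$ with the double of the interval $\EGp[\h,\h[1]]^\Fu$ in $\EGp(\D_3(Q))^\Fu$. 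Combining the vertex and edge bijections yields the desired canonical identification.

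I expect the main obstacle to be verifying that the vertex bijection is both exhaustive and mutation-compatible in the genuine fusion cases of types $H$ and $I$, where $Q$ is a disjoint union of copies of an ADE quiver and $\Fu$ is a Temperley-Lieb-Jones category rather than $\VG$ for a finite group $G$. In the group-folding cases of \Cref{ex:folding} one may appeal to an honest automorphism action on the root system and its known compatibility with cluster mutation (\cite{FZ,FR}); but for a genuine fusion folding there is no such group, so one must argue directly that $\Fu$-stability, defined through the tensor functors $\Psi_x$, carves out exactly the weighted-folding-invariant clusters, and in particular that each orbit $\Fu(Z)$ has the expected size predicted by $\Sim_{\Fu}\wt(\bfi)$. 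The cleanest way to close this gap is to reconcile our construction with Duffield-Tumarkin's categorical model \cite{DT1,DT2}: their weighted-folded cluster-tilting objects should coincide with the $\Fu$-stable $2$-CTS and their weighted mutations with the $\Fu$-ind mutations. A final counting check, matching the number of $\Fu$-stable $2$-CTS against the number of clusters of type $\spec$ (the $W_\spec$-Catalan number), then rules out any spurious vertices and completes the identification.
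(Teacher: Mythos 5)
Your proposal is correct and rests on the same two pillars as the paper's proof: the Duffield--Tumarkin realization of $\uCEG(\spec)$ inside the unfolded exchange graph via simultaneous mutations \cite{DT1,DT2}, and \Cref{lem:G-Ind} identifying a $\Fu$-ind mutation with the commuting product of mutations over a $\Fu$-orbit. The organization differs, though, in a way worth noting. The paper argues by propagation from a seed: it observes that the initial cluster tilting object $\mathbf{Y}_Q$ is $\Fu$-stable (when the orientation is compatible with the folding), and that $\Fu$-ind mutation coincides with the multi-mutation of \cite{DT1,DT2}, so the two exchange graphs are generated identically from matching starting points; exhaustiveness of the vertex set is handled only implicitly, through connectedness of the mutation class. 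You instead set up a vertex bijection first (every $\Fu$-stable $2$-CTS decomposes into $\Fu$-orbits indexed by $\spec_0$) and then match edges, closing surjectivity by a count against the $W_\spec$-Catalan number. Your route makes explicit a point the paper's terse proof glosses over — that there could a priori be $\Fu$-stable $2$-CTS not arising from the Duffield--Tumarkin construction, i.e.\ not reachable by $\Fu$-ind mutation from the initial one — and the Catalan-number check (or, equivalently, connectedness of $\uCEG_2(Q)^\Fu$) is exactly what rules this out. Conversely, the paper's seed-propagation argument sidesteps your worry about justifying, for an arbitrary $\Fu$-stable cluster, that the number of $\Fu$-orbits equals $|\spec_0|$: that claim is immediate for the initial object and is preserved under $\Fu$-ind mutation, but is not obvious in isolation, so if you keep the vertex-first structure you should derive the orbit count from the rank of $\KG(\D)$ as a $\KG(\Fu)$-module rather than assert it. Either way the identification goes through, and your instinct to reconcile everything with \cite{DT1,DT2} in the genuine fusion (type $H$ and $I$) cases is precisely what the paper does.
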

\begin{proof}
By \cite{DT1,DT2}, vertices and edges of $\uCEG(\spec)$ can be realized as certain vertices/paths of
$\uCEG(Q')$ for some unfolded $Q'$.
More precisely, the edges/mutations in $\uCEG(\spec)$ correspond to simultaneous mutations in $\uCEG(Q')$.
Notice that the unfolded quiver $Q'$ is the same as ours (or some connected component of ours).

When the orientation of $Q\supset Q'$ is compatible with weighted folding,
the initial cluster tilting object $\mathbf{Y}_Q$ is $\Fu$-stable.
Furthermore, \Cref{lem:G-Ind} implies that $\Fu$-ind forward mutation is simultaneously mutating
many summands of a $\Fu$-stable cluster tilting object $\mathbf{Y}$ (which are in the same $\Fu$-ind orbit).
This coincides with the multi-mutation in \cite{DT1,DT2}.
Hence, the theorem follows.
\end{proof}

%\begin{remark}
%We think the proposition above should hold in general (say for acyclic $Q$).
%\end{remark}

A closely related work is \cite{HHQ},
which shows $\on{CT}(\spec)\cong\Br_{\spec}$,
where $\on{CT}(\spec)$ is the cluster braid group, defined as the fundamental group of a groupoid version of $\CEG_2(Q)^\Fu$ by gluing many $(m+2)$-gons for each $m$ appearing as weights in $\spec$.
See there for more details.
In fact, $\Br_{\spec}$ will also appear in the next application.

%=========================================================
\subsection{Hyperplane arrangements of finite Coxeter-Dynkin type}\
%=========================================================
\def\fkh{\mathfrak{h}}
\def\fkhd{\mathfrak{h}_\Delta}

\def\reg{{\on{reg}}}
\def\hreg{\fkh^\reg}
\def\UC{\widetilde{\fkh^\reg_\Delta/W_\Delta}}
\def\dq{\Delta_{\spec}}

%=========================================================
\paragraph{\textbf{Root systems and hyperplanes}}\

Let $\Delta$ be a Coxeter diagram (with edge weight $\wt$).
Denote by $\Lambda_\Delta$ the root system associated to $\Delta$, living in $\RR^\bfn$.
Denote by $\fkhd\cong\CC^\bfn$ the complexification of dual space of $\RR^\bfn$.
So $\Lambda_\Delta\subset\fkhd^*$ and each root $\alpha\in\Lambda$ determines a hyperplane
\[
    H_\alpha=\{\varsigma\in\fkhd\mid \varsigma(\alpha)= 0\}.
\]
Let $\hreg_\Delta\subset\fkhd$ be the complement of the root hyperplanes, i.e.
\begin{equation}\label{eq:hreg}
    \hreg_\Delta\colon= \fkhd\setminus
    \big(\bigcup_{\alpha\in\Lambda_\Delta}H_\alpha\big).
\end{equation}

Recall we have defined the braid and Weyl group associated to $\Delta$.

It is well-known that $W_\Delta$ can be geometrically realized as the reflection group on $\hreg_\Delta$,
where the generators $r_{s_{\bfi}}$ are the reflections with respect to simple roots of $\Delta$. Moreover, $W_\Delta$ acts freely on $\hreg_\Delta$ and one has
\begin{gather}\label{eq:pi1=Br}
    \pi_1(\hreg_\Delta/W_\Delta)=\Br_\Delta.
\end{gather}

A fundamental domain for $\hreg_\Delta/W_\Delta$ can be chosen to be the Weyl chamber
\[
    U_\Omega=\{ \theta\in\fkhd  \mid
        \on{Im} \alpha(\theta)\ge0,\;\forall  \alpha\in\Lambda_\Delta \}.
\]

For simply laced types ADE,
\cite{B2} shows that the space of stability conditions on 2-Calabi-Yau category associated to $\Delta$
is a covering space of $\hreg_\Delta/W_\Delta$
and \cite{BT} shows that it is in fact the universal cover.
We shall generalize this to non-simply laced types.

%=========================================================
\paragraph{\textbf{Weighted folding root systems}}\

Consider a weighted folding
\begin{gather}\label{eq:ul f}
    \wtF\colon \Omega\to\Delta
\end{gather}
from a (possibly a disjoint union of) ADE diagram $\Omega$ to a non-simply laced diagram $\Delta$.
So we have that
\[
    \Omega=\underline{Q} \quad\text{and}\quad \Delta=\underline{\spec}
\]
are the underlying graphs of the quivers $Q$ and $\spec$, respectively.
We restricted to the cases in \Cref{ex:folding} or \Cref{ex:w-folding}.
For $\Omega$, we have the corresponding root system $\Lambda_\Omega$ and associated $\fkh_\Omega$, $W_\Omega$, etc.

\begin{example}\label{ex:H34}
Here are some examples of showing (the projection of) weighted folding of roots systems.
\begin{itemize}
%\begin{figure}[ht]\centering\vskip-1pc
%    \includegraphics[width=7cm]{}\vskip-1pc
%\caption{The root system of type $E_6/F_4$}
%\label{fig:E6/F4}
%\end{figure}
%\item the roots of type $F_4$ forms the vertex set of a 24 cell.
%And the root systems of type $E_6$ consists of two proportional (with ratio $\sqrt{2}$) copies of
%    roots systems of type $F_4$.
%When projecting to the Coxeter plane, one can not tell the difference as shown in \Cref{fig:E6/F4},
%except that the red vertices are the projection of two roots in the $E_6$ case.

\item The root system of $D_6$ decomposes into proportional (with golden ratio) copies of type $H_3$,
as shown in \Cref{fig:D6=2H3}--one in blue and one in orange.
The middle picture is the $D_6$ root system (which is putting the blue and orange ones together)
while each black vertex is the projection of two roots.

\begin{figure}[ht]\centering
\includegraphics[width=\textwidth]{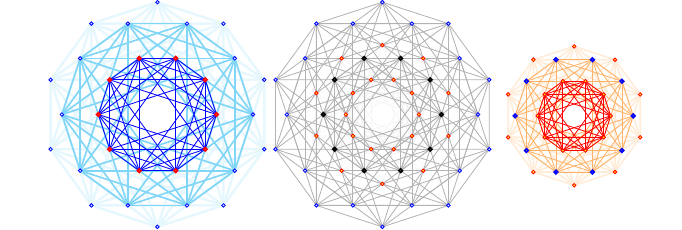}
\caption{The root systems of
    $\Lambda_{D_6}=\frac{\sqrt{5}+1}{2}\cdot \Lambda_{H_3}\bigsqcup\Lambda_{H_3}$}
\label{fig:D6=2H3}
\end{figure}
\begin{figure}[ht]\centering
  \centering\makebox[\textwidth][c]{
    \includegraphics[width=18cm]{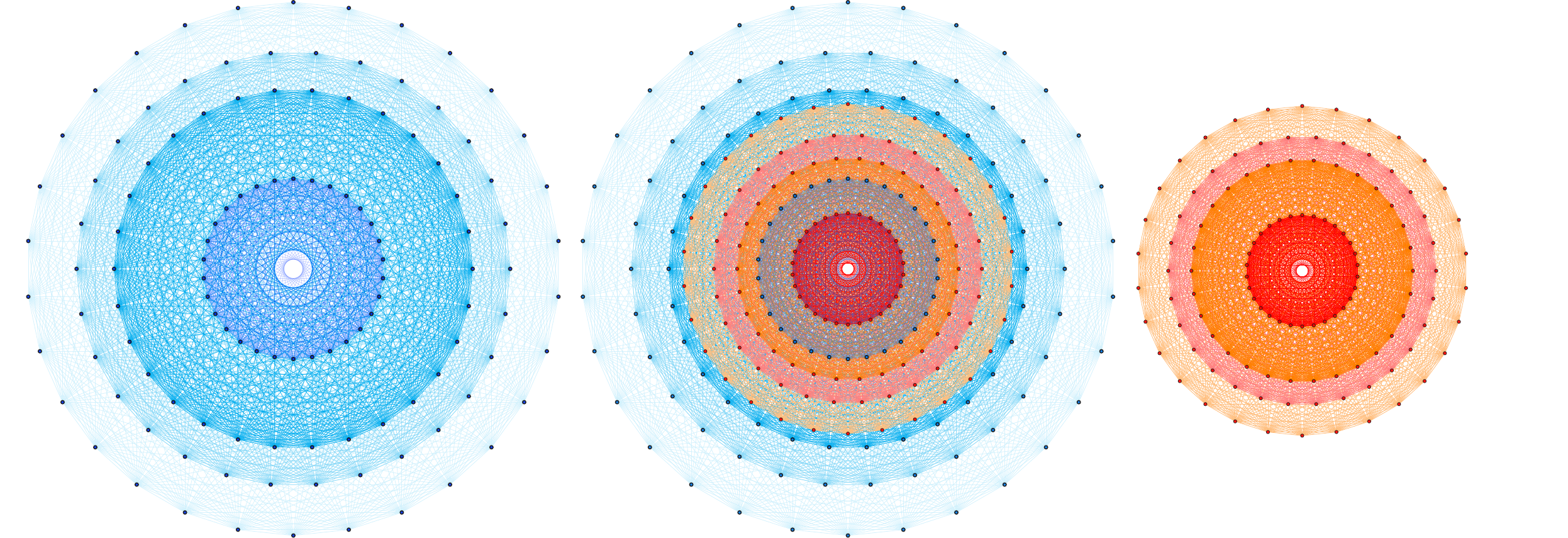}
  }
\caption{The root systems of
    $\Lambda_{E_8}=\frac{\sqrt{5}+1}{2}\cdot \Lambda_{H_4}\bigsqcup\Lambda_{H_4}$}
\label{fig:E8=2H4}
\end{figure}

\item the 120 roots of type $H_4$ form the vertex set of a 600 cell.
And the root systems of type $E_8$ consists of two proportional (with golden ratio) copies of
    roots systems of type $H_4$, as shown in \Cref{fig:E8=2H4}.
\end{itemize}
\end{example}

%=========================================================
We can identify $\fkhd^*=(\fkh_\Omega^*)^\Fu$.
Explicitly, each vertex $\bfi\in\Delta_0$ corresponds to a simple root $\bfs_{\bfi}\in\Lambda_\Delta$ and
$\wtF^{-1}(\bfi)$ is a collection of simple roots of $\Omega$.
One can choose $\bfs_{\bfi}$ in each $\Fu$-stable subspace
\begin{gather}\label{eq:s}
    \RR\cdot\sum_{s\in \wtF^{-1}(\bfi)} s
\end{gather}
so that all $\bfs_{\bfi}$ form the simple roots of a root system of type $\Delta$,
which we will identify it with $\Lambda_\Delta$.
For instance, in type $H$, one can just take $\bfs_{\bfi}=\sum_{s\in \wtF^{-1}(\bfi)} s$,
cf. \Cref{ex:H34}.
Thus we have $\fkhd^*=(\fkh_\Omega^*)^\Fu$.

Then
\[
    \fkh_{\Delta}=\Hom(\fkhd^*,\CC)=\Hom(\fkh_\Omega^*,\CC)^\Fu
        =\fkh_\Omega^\Fu
\]
and hence (restricted to the fundamental domain) we have
\begin{equation}\label{eq:U=U}
    U_\Delta=U_\Omega^\Fu.
\end{equation}

%=========================================================
\paragraph{\textbf{Weighted folding braid/Weyl groups}}\

On the level of braid/Weyl groups, there are also induced injections
(cf. \eqref{eq:s}).
\begin{lemma}\cite{Cr}\label{thm:Cr}
The weighted folding \eqref{eq:ul f} induces an injection
\begin{gather}\label{eq:iota}
\begin{array}{rcl}
  \wtF^*\colon \Br_\Delta & \hookrightarrow & \Br_\Omega.\\
                            b_\bfi & \mapsto & \prod_{i\in \wtF^{-1}(\bfi) } b_i
\end{array}
\end{gather}
\end{lemma}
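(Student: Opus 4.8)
The plan is to treat well-definedness and injectivity separately; throughout set $\beta_\bfi\colon=\prod_{i\in\wtF^{-1}(\bfi)}b_i\in\Br_\Omega$ for the proposed image of $b_\bfi$. Because every folding in \Cref{ex:folding} and \Cref{ex:w-folding} is nice, the vertices in a common fibre $\wtF^{-1}(\bfi)$ are pairwise non-adjacent in $\Omega$, so the generators $b_i$ with $i\in\wtF^{-1}(\bfi)$ commute and $\beta_\bfi$ is a well-defined positive element, independent of the order of the factors. To see that $b_\bfi\mapsto\beta_\bfi$ respects the relations \eqref{eq:BrD}, I would verify, for each edge $\bfi-\bfj$ of $\Delta$ of weight $m=m_{\bfi,\bfj}$, the braid identity $\Br^{m}(\beta_\bfi,\beta_\bfj)$ in $\Br_\Omega$. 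Each such identity involves only the generators indexed by $\wtF^{-1}(\bfi)\cup\wtF^{-1}(\bfj)$, so it reduces to a short list of local sub-foldings: the weight-$4$ cases come from $A_3\to C_2$ and the $D$-to-$B$, $E_6$-to-$F_4$ pieces, the weight-$6$ case from $D_4\to G_2$, and the weight-$5$ and weight-$m$ cases from the exotic pieces $D_6\to H_3$, $E_8\to H_4$ and $Q^{\sqcup2}\to I_2(m)$. These finitely many relations can be checked by direct computation in the positive Artin monoid, the golden-ratio foldings of root systems in \Cref{fig:D6=2H3} and \Cref{fig:E8=2H4} making the corresponding dihedral relations among reflections transparent.

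Once $\wtF^*\colon\Br_\Delta\to\Br_\Omega$ is known to be a homomorphism, I would establish injectivity following \cite{Cr}. The crucial point is that $\wtF^*$ restricts to a homomorphism $\Br_\Delta^+\to\Br_\Omega^+$ of the positive Artin monoids, as each $\beta_\bfi$ is positive. Both monoids are of spherical type, hence Garside: they embed into their groups, and their left-divisibility orders are lattices with well-defined least common multiples (written $\vee$). The plan is to verify that this monoid map is an \emph{LCM-homomorphism} in the sense of \cite{Cr}: it is injective on atoms, and for every pair $\bfi,\bfj$ it preserves least common multiples, $\wtF^*(b_\bfi\vee b_\bfj)=\beta_\bfi\vee\beta_\bfj$, a common multiple existing on one side exactly when it does on the other. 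This condition merely repackages the braid identities $\Br^{m_{\bfi,\bfj}}(\beta_\bfi,\beta_\bfj)$ from above together with the commuting-fibre property. Injectivity of $\wtF^*$ then follows from the theorem of \cite{Cr} that an LCM-homomorphism of spherical-type Artin monoids is injective: the monoid map $\Br_\Delta^+\to\Br_\Omega^+$ is injective, each monoid embeds into its group, and $\wtF^*$ is the unique group homomorphism extending it. For the classical foldings of \Cref{ex:folding}, where $\Delta=\Omega/G$ is an honest quotient by a group $G$ of diagram automorphisms, one may instead identify the image with the fixed subgroup $(\Br_\Omega)^G$, known to be isomorphic to $\Br_\Delta$; this shortcut is unavailable for the $H$ and $I$ foldings, which are not diagram quotients, so the LCM argument is what handles all cases uniformly.

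The main obstacle is the LCM-compatibility for the exotic foldings onto $H_3,H_4$ and $I_2(m)$. These do not arise from any symmetry of $\Omega$, so one cannot appeal to a fixed-subgroup argument and must instead check by hand that the weight-$5$ and weight-$m$ braid relations interact correctly with the Garside lattice, i.e. that forming least common multiples commutes with $\wtF^*$ on every pair of atoms. This case-by-case verification, carried out in \cite{Cr}, is where the genuine difficulty lies; by contrast the well-definedness of the first paragraph and the formal deduction of injectivity from the LCM property are routine.
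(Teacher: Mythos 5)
Your proposal is correct and follows essentially the same route as the paper, which offers no argument of its own for this lemma beyond the attribution to \cite{Cr}: Crisp's proof is precisely the one you reconstruct, namely that the fibres $\wtF^{-1}(\bfi)$ are discrete so each $\beta_\bfi$ is a well-defined product of commuting generators, that the braid relations reduce to finitely many local verifications for each edge weight, and that injectivity follows from the LCM-homomorphism theorem for positive Artin monoids of spherical type (with the group-level injectivity recovered from the monoid-level one via the Ore fraction decomposition). The only caveat is that the genuinely hard content --- the LCM compatibility checks for the $H_3$, $H_4$ and $I_2(m)$ foldings and the injectivity theorem for LCM-homomorphisms --- is deferred to \cite{Cr}, exactly as the paper itself does.
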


Similarly, we also have $\wtF^*\colon W_\Delta \hookrightarrow W_\Omega$.
In particular,
when identifying $W_\Omega$ with reflection group on $\hreg_\Delta$,
we have
\[
    r_{\bfs_{\bfi}}=\prod_{i\in \wtF^{-1}(\bfi) } r_{s_i}.
\]

\paragraph{\textbf{2-Calabi-Yau categories}}\

Let $\D_2(\Omega)=\D_2(Q)$ be the 2-Calabi-Yau category associated to $Q$ (cf. \Cref{sec:quivery}).
When $Q$ is connected, $\D_2(Q)$ can be alternatively constructed as a subcategory of
the 2-Calabi-Yau category associated to the corresponding Kleinian singularity \cite{B2}.
However, we will allow the case when $Q$ is a union of ADE quivers, which does not really make a difference.
$\D_2(\Omega)$ admits a canonical heart $\h_\Omega$ with simples $\{S_i \mid i\in \Omega_0\}$,
which are $2$-spherical objects.
Recall the spherical twist functor associated to any spherical object $S$ is defined as
\begin{equation}\label{eq:ST}
    \Phi_S(X)=\Cone\left(S\otimes\Hom^\bullet(S,X)\to X\right)
\end{equation}
with inverse
\[
    \Phi_S^{-1}(X)=\Cone\left(X\to S\otimes\Hom^\bullet(X,S)^\vee \right)[-1].
\]
The spherical twists of these simples generate the spherical twist group $\ST(\Omega)$.
By \cite[Thm.~3.1]{BT} (cf. \cite[Thm.~B]{QW}), there is a natural isomorphism:
\begin{gather}\label{eq:iota O}
\begin{array}{rcl}
  \iota_\Omega\colon \Br_\Omega & \cong & \ST(\Omega)\\
                            b_i & \mapsto & \Phi_{S_i}.
\end{array}
\end{gather}

The Grothendieck group $\KG(\D_2(\Omega))$ equipped with the Euler form $\<-,-\>_\Omega$ can be identified with the root lattice $\Lambda_Q$ equipped with the Killing form $\chi_\Omega(-,-)$,
where the simple roots correspond to simple objects (cf. \cite[\S~1.2]{B2}):
\begin{gather}\label{eq:iota2}
\begin{array}{rcl}
  \iota_Q\colon \Lambda_\Omega & \cong & \KG(\D_2(\Omega))\\
                            s_i & \mapsto & [S_i].
\end{array}
\end{gather}
As the two forms match,
the spherical twist \eqref{eq:ST} of $S_i$ acts on $\KG(\D_2(\Omega))$ as
reflection with respect to $s_i$ on $\Lambda_\Omega$,
in the sense that the following diagram commutes:
\begin{gather}\label{eq:fig}
\begin{tikzpicture}[xscale=.5,yscale=.5]
\draw(180:3)node(o)
    {$\Lambda_\Omega$}   (-3,2.5)node(b)
    {\small{$W_\Omega$}}   (0:3)node(a)
    {$\KG(\D_2(\Omega))$}   (3,2.5)node(s)
    {\small{$\ST(\Omega)$}};
\draw[-stealth](o)to node[above]{$\cong$}(a);
\draw[stealth-](b)to node[above,white]{$\pi_\Omega$}(s);
\draw[-stealth](-3.2,.6).. controls +(135:2) and +(45:2) ..(-3+.2,.6);
\draw[-stealth](3-.2,.6).. controls +(135:2) and +(45:2) ..(3+.2,.6);
\end{tikzpicture}.
\end{gather}

As mentioned above, for simply laced ADE diagram $\Omega$,
there is an isomorphism (\cite[Thm.~1.2]{B2})
\begin{gather}\label{eq:ciso}
    c_\Omega\colon \Stap\D_2(\Omega)/\ST(\Omega)\to\hreg_\Omega/W_\Omega.
\end{gather}
A fundamental domain for $\Stap\D_2(\Omega)/\ST_\Omega$ is $U(\h_\Omega)$,
consisting of $\sigma$ whose heart is $\h_\Omega$.
As shown in \cite{B2}, the key to establish \eqref{eq:ciso} is to identify
\begin{gather}\label{eq:corr}\begin{array}{rcl}
   U_\Omega^\circ & \xrightarrow{\cong} & U(\h_\Omega)^\circ \\
        \theta & \mapsto & \sigma_\theta=(Z_\theta,\h_\Omega),
\end{array}\end{gather}
such that
\begin{gather}\label{eq:Z=theta}
    Z_\theta ( [S_i] ) = \theta( s_i ),
\end{gather}
where $?^\circ$ denotes the interior of $?$.
Further, \eqref{eq:iota O} implies that $\Stap\D_2(\Omega)$ is in fact a universal cover
(noticing \eqref{eq:pi1=Br}).
We end this section by generalizing such a result to the non-simply laced diagram $\Delta$ via fusion-stable stability structure.

%=========================================================
\paragraph{\textbf{Main application}}\

Recall from \Cref{sec:fun.action},
there is an action $\Fu$ on $\D_2(\Omega)$.
We proceed to show the main result of this section.

\begin{theorem}\label{thm:Kleinian}
Let $\Delta$ be a finite Coxeter-Dynkin diagram
with an associated fusion weighted folding \eqref{eq:f:Q2Q}, listed as in \Cref{ex:folding} or \Cref{ex:w-folding}.
Then there is a natural isomorphism $c_\Delta$ between complex manifolds
\begin{equation}\label{eq:thm}
\begin{tikzpicture}[xscale=.7,yscale=.7]
\draw(180:3)node(o)
    {$\widetilde{\hreg_\Delta/W_\Delta}$}   (-3,2.2)node[blue](b)
    {\small{$\Br_\Delta$}}   (0:3)node(a)
    {$\Stap(\D_2(\Omega))^\Fu$}   (3,2.2)node[blue](s)
    {\small{$\ST(\Delta)$}};
\draw[-stealth](o)to node[above]{$c_\Delta$} node[below]{$\cong$}(a);
\draw[-stealth,blue](b)to node[above]{\small{$\cong$}}(s);
\draw[-stealth,blue](-3.2,.6).. controls +(135:2) and +(45:2) ..(-3+.2,.6);
\draw[-stealth,blue](3-.2,.6).. controls +(135:2) and +(45:2) ..(3+.2,.6);
\end{tikzpicture}
\end{equation}
with identical braid/spherical twist action.
Moreover, $\Stab(\D_2(\Omega))^\Fu$ is the universal cover of $\hreg_\Delta/W_\Delta$, which is contractible.
\end{theorem}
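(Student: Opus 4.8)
The plan is to descend Bridgeland--Thomas's simply-laced identification \eqref{eq:ciso} to the $\Fu$-stable locus, using the fusion-stable deformation theorem (\Cref{thm:stab}) to guarantee that $\Stab(\D_2(\Omega))^\Fu$ is a complex manifold and the finite-type structure of \Cref{thm:QW+} to control its global shape. \emph{Central charges.} First I would identify the chart of $\Fu$-stable central charges with $\fkhd$. Under the isomorphism $\iota_Q\colon\Lambda_\Omega\cong\KG(\D_2(\Omega))$ of \eqref{eq:iota2} together with the prescription $Z([S_i])=\theta(s_i)$ of \eqref{eq:Z=theta}, a central charge $Z$ corresponds to a linear form $\theta\in\fkh_\Omega$. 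I claim the $\KG(\Fu)$-linearity $Z([\Psi_x Y])=\FP(x)\,Z([Y])$ defining $\Fu$-stability translates exactly to $\theta\in\fkh_\Omega^\Fu=\fkhd$. This is a decategorification computation: the $\FP$-scaling on $\CC_\Fu$ matches the proportionality constants realizing $\Lambda_\Delta\subset\Lambda_\Omega$ as in \eqref{eq:s} (the golden ratio in type $H$, cf.\ \Cref{ex:H34}). Hence $\Hom_{\KG(\Fu)}(\KG(\D_2(\Omega)),\CC_\Fu)\cong\fkhd$, which by \Cref{thm:stab} is a local chart for $\Stab(\D_2(\Omega))^\Fu$.

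\emph{Chamber correspondence and twists.} Next I would restrict \eqref{eq:corr} to $\Fu$-stable parts. The canonical heart $\h_\Omega$ is $\Fu$-stable because the $\Fu$-action groups $\Sim\h_\Omega=\{S_i\}$ into the $|\Delta_0|$ fusion-simple orbits prescribed by $\wtF^{-1}$; thus $U(\h_\Omega)^\Fu\cong\mathbb{H}^{|\Delta_0|}$ is a top cell, and the chamber identification restricts along \eqref{eq:U=U} to $U_\Delta^\circ=(U_\Omega^\circ)^\Fu\xrightarrow{\cong}(U(\h_\Omega)^\circ)^\Fu$. Since $\Phi_{S_i}$ acts on $\KG$ as the reflection $r_{s_i}$ (diagram \eqref{eq:fig}), the folded product $\prod_{i\in\wtF^{-1}(\bfi)}\Phi_{S_i}$ acts as $r_{\bfs_\bfi}$; defining $\ST(\Delta)$ as the subgroup these generate, \Cref{thm:Cr} and \eqref{eq:iota O} give $\ST(\Delta)\cong\Br_\Delta$ compatibly with $W_\Delta$.

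\emph{Gluing and contractibility.} I would then glue these cells into a covering of $\hreg_\Delta/W_\Delta$. By \Cref{thm:QW+} the space $\Stap(\D_2(\Omega))^\Fu$ inherits a CW-ish structure whose top cells are the $U(\h)^\Fu\cong\mathbb{H}^{|\Delta_0|}$ and whose codimension-$k$ walls are $\Fu$-simple tiltings $\mu^\sharp_\hF$ with $\hF$ generated by $k$ fusion-simples. This matches the face stratification of the Weyl chambers for $W_\Delta$, a $\Fu$-simple tilting across a wall corresponding to crossing a hyperplane $H_{\bfs_\bfi}$; assembling the local identifications produces the $\ST(\Delta)$-equivariant covering $\Stap(\D_2(\Omega))^\Fu\to\hreg_\Delta/W_\Delta$. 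As its deck group is the full $\Br_\Delta=\pi_1(\hreg_\Delta/W_\Delta)$, this covering is universal, yielding the isomorphism $c_\Delta$ of \eqref{eq:thm} and the identification $\Stap(\D_2(\Omega))^\Fu\cong\widetilde{\hreg_\Delta/W_\Delta}$. Contractibility is then immediate from \Cref{thm:QW+} (finite-type fusion-stable components are contractible), equivalently reflecting Deligne's $K(\pi,1)$ theorem in finite type.

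\emph{Main obstacle.} The hardest step is the gluing: I must verify that $\EG(\D_2(\Omega))^\Fu$ reproduces the chamber-adjacency combinatorics of $W_\Delta$ with no spurious $\Fu$-stable hearts, and that each $\Fu$-simple tilting crosses exactly one $W_\Delta$-wall. For the genuinely non-group cases (types $H,I$ from $\TLJ$) this needs the rigidity/discreteness hypotheses of \Cref{lem:G-simple} verified for the relevant fusion-simples, so that \eqref{eq:G-simple} realizes a fusion-simple tilting as a single wall-crossing; this, together with matching the $\FP$-proportionality to the folded root geometry of \Cref{ex:H34}, is the crux.
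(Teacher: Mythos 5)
Your proposal follows essentially the same route as the paper: restrict the ADE identification \eqref{eq:ciso} and the chamber correspondence \eqref{eq:corr} to the $\Fu$-stable locus via \eqref{eq:U=U}, define $\ST(\Delta)$ through \Cref{thm:Cr} and \eqref{eq:iota O}, use \Cref{lem:G-simple} to realize the folded twists as $\Fu$-simple tiltings (this is exactly the paper's key step $(\star)$, showing $\ST(\Delta)$ preserves $\Fu$-stability), and deduce contractibility from \Cref{thm:QW+}. The crux you flag --- that fusion-simple tiltings match single wall-crossings and that no spurious $\Fu$-stable hearts appear --- is precisely what the paper handles by the inductive tilting argument following \cite[Cor.~8.5]{KQ1}, so your sketch is correct and aligned with the paper's proof.
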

\begin{proof}
For unfolded (ADE type) $\Omega$, the theorem holds by \eqref{eq:ciso}, faithfulness of spherical twist group action in \cite{BT} and \cite[Thm.~A]{QW}.
Now we need to take the $\Fu$-stable version of all the things discussed above.
Namely:
\begin{itemize}
  \item For the spherical twist, combining \eqref{eq:iota O} with \Cref{thm:Cr}, one obtain the following commutative diagram:
\begin{equation}\label{eq:cpt}
\begin{tikzcd}
  \Br_\Delta \ar[r,"\cong"',"\iota_\Delta"] \ar[d,hookrightarrow,"\wtF^*"'] & \ST(\Delta) \ar[d,hookrightarrow,""]\\
  \Br_\Omega \ar[r,"\cong"',"\iota_\Omega"] & \ST(\Omega),
\end{tikzcd}
\end{equation}
where $\ST(\Delta)$ is the image of $\iota_\Omega\circ \wtF^*$
and $\iota_\Delta$ sends $b_\bfi$ to $\prod_{i\in \wtF^{-1}(\bfi) } \Phi_{S_i}$.
  \item For the fundamental domains, noticing \eqref{eq:U=U}, one restricts \eqref{eq:corr} to the $\Fu$-stable part that gives an isomorphism $U_\Delta^\circ  \xrightarrow{\cong}  U(\h_\Delta)^\circ$,
      where $\h_\Delta=\h_\Omega$ is the initial heart (which is $\Fu$-stable).
\end{itemize}
What is left to check is
\begin{itemize}%\begin{enumerate}[label=\Roman*.]
  \item[($\star$)] $\ST(\Delta)$ indeed acts on $\Stab^\Fu(\D_2(\Omega))$, i.e.
  preserves the $\Fu$-stable property.
%  \item $U_\Delta$ that sits inside $U_\Omega$ becomes the $\Fu$-stable part of $U(\h_\Omega)^\circ$ under the isomorphism \eqref{eq:corr}.
\end{itemize}%\end{enumerate}
Then the compatibility \eqref{eq:cpt} between spherical twist group and Weyl group for $\Delta$
implies the required isomorphism.
Finally, the contractibility follows from the finiteness property, i.e. Theorem~\ref{thm:QW+}.

\textbf{For ($\star$):}
We first notice the fact that spherical twist actions on hearts are equivalent to
(a sequence of) simple tilting for $\D_2(\Omega)$ (cf. \cite{B2}).
By \eqref{eq:iota} and \eqref{eq:cpt}, we see that
the inverse of any initial generator $\phi^{-1}_{\bfi}$ of $\ST(\Delta)$ corresponds to
a product of spherical twists $\phi_i=\phi_{S_i}$ for $1\le i\le l$,
where $\wtF^{-1}(\bfi)=\{1,\ldots,l\}$ and $\{S_i\}_{i=1}^l$ are in the same $\Fu$-simple subcategory
(denoted by $\Fu(S_{\bfi})$).
Using formula \eqref{eq:G-simple} in \Cref{lem:G-simple}, we have
\[
    \phi^{-1}_{\bfi}(\h_\Delta)=\big( \prod_{i=1}^l \phi_{i}^{-1} \big) (\h_\Delta)
    =\big( \prod_{i=1}^l \mu^\sharp_{S_i} \big) (\h_\Delta)
    =\mu^\sharp_{ \Fu(S_{\bfi}) } (\h_\Delta).
\]
Thus $\phi^{-1}_{\bfi}(\h_\Delta)$ is also $\Fu$-stable.
Inductively as in \cite[Cor.~8.5]{KQ1},
the products of spherical twists become products of simple tiltings and
one deduces that at each step, the product of simple tiltings is in fact a $\Fu$-stable one.
Hence, $\ST(\Delta)$ preserves the $\Fu$-stable property.
\end{proof}

As a corollary, we extend the alternative proof of the following theorem, cf. \cite[\S~5.2]{QW}, to the non-simply laced case.
\begin{theorem}\cite{D}
The universal cover of $\hreg_\Delta/W_\Delta$ is contractible.
\end{theorem}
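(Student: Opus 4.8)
The plan is to obtain this as an immediate consequence of \Cref{thm:Kleinian}, so the whole task reduces to assembling what has already been established. First I would invoke the isomorphism $c_\Delta$ constructed there, which identifies the universal cover $\widetilde{\hreg_\Delta/W_\Delta}$ with the space $\Stab(\D_2(\Omega))^\Fu$ of $\Fu$-stable stability conditions on the $2$-Calabi-Yau category $\D_2(\Omega)$. Under this identification the deck transformation group $\Br_\Delta=\pi_1(\hreg_\Delta/W_\Delta)$ is matched, via $\iota_\Delta$, with the spherical twist group $\ST(\Delta)$ acting on $\Stap(\D_2(\Omega))^\Fu$, so that the quotient recovers $\hreg_\Delta/W_\Delta$ itself. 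Since \Cref{thm:Kleinian} already asserts that $\Stab(\D_2(\Omega))^\Fu$ is the universal cover, it remains only to see that this space is contractible.

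For that second step I would appeal directly to \Cref{thm:QW+}: the space in question is the $\Fu$-stable part $\Stab_0^\Fu$ of a finite-type component, and \Cref{thm:QW+} guarantees that it again consists of finite components carrying the CW-ish stratification inherited from $\Stab_0$, with top cells $U(\h)^\Fu\cong\mathbb{H}^d$ for $d=\dim\Hom_{\KG(\Fu)}(\KG(\D),\CC_{\Fu})$, glued along walls indexed by $\Fu$-simple tiltings. The contractibility argument of \cite[Thm.~A]{QW} then applies verbatim in the $\Fu$-stable setting, so $\Stab(\D_2(\Omega))^\Fu$ is contractible; combined with the first step this finishes the proof.

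The genuine mathematical content of this statement is not in the corollary itself but in the two theorems on which it rests, so no real obstacle remains at this final stage. If I were proving the result from scratch I would expect the main difficulty to be precisely what \Cref{thm:stab} supplies, namely that the $\Fu$-stable locus $\Stab(\D)^\Fu$ is itself a complex manifold (the submanifold cut out by the $\KG(\Fu)$-linearity condition on central charges), since without this one cannot even formulate a covering-space or contractibility statement. The only point I would verify with care is that $\ST(\Delta)$ genuinely preserves $\Fu$-stability, so that the free action descends correctly; but this is exactly item $(\star)$ in the proof of \Cref{thm:Kleinian}, handled through \Cref{lem:G-simple} by rewriting each twist $\phi^{-1}_{\bfi}$ as the simultaneous $\Fu$-stable simple tilting $\mu^\sharp_{\Fu(S_{\bfi})}$.
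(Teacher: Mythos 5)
Your proposal matches the paper exactly: the result is stated there as an immediate corollary of \Cref{thm:Kleinian}, which identifies the universal cover of $\hreg_\Delta/W_\Delta$ with $\Stab(\D_2(\Omega))^\Fu$ and obtains contractibility from the finite-type/CW-ish stratification argument of \Cref{thm:QW+} (extending \cite[Thm.~A]{QW}). Your additional remarks on where the real content lies --- \Cref{thm:stab} for the manifold structure and item $(\star)$ for the $\ST(\Delta)$-action preserving $\Fu$-stability --- are accurate and consistent with the paper's proof of \Cref{thm:Kleinian}.
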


%=========================================================


\begin{thebibliography}{999999}
%=========================================================
\newcommand{\au}[1]{\textrm{#1},}
\newcommand{\ti}[1]{\textrm{#1},}
\newcommand{\jo}[1]{\textit{#1}}
\newcommand{\vo}[1]{\textbf{#1}}
\newcommand{\yr}[1]{(#1)}
\newcommand{\pp}[2]{#1--#2.}
\newcommand{\arxiv}[1]{\href{http://arxiv.org/abs/#1}{arXiv:#1}}


\bibitem[AI12]{AI12}
  \au{T.~Aihara and O.~Iyama}
  \ti{Silting mutation in triangulated categories}
  \jo{Journal of the London Mathematical Society} \vo{85} \yr{2012} \pp{633}{668}
  (\arxiv{1009.3370})

\bibitem[AW22]{AW}
  \au{J.~August and M.~Wemyss}
  \ti{Stability conditions for contraction algebras}
  \jo{Forum Math. Sigma} \vo{10} \yr{2022} e73.
  \arxiv{1907.12756}

\bibitem[BT11]{BT}
  \au{C.~Brav, \and H.~Thomas}
  \ti{Braid groups and Kleinian singularities}
  \jo{Math. Ann.} \vo{351} \yr{2011} \pp{1005}{1017}
  (\arxiv{0910.2521})

\bibitem[B07]{B1}
  \au{T.~Bridgeland}
  \ti{Stability conditions on triangulated categories}
  \jo{Ann. Math.} \vo{166} \yr{2007} \pp{317}{345}
  (\href{http://arxiv.org/abs/math/0212237}{arXivf:math/0212237})

\bibitem[B09]{B2}
  \au{T.~Bridgeland}
  \ti{Stability conditions and Kleinian singularities}
  \jo{Int. Math. Res. Notices} \vo{21} \yr{2009} \pp{4142}{4157}
  (\href{https://arxiv.org/abs/math/0508257v3}{arXiv:math/0508257})


\bibitem[CQ24]{CQ}
  \au{W. Chang and Y. Qiu}
  \ti{Frobenius morphisms and stability conditions}
  \jo{Publ. Res. Inst. Math. Sci.} \vo{60} \yr{2024} \pp{271}{303}
  (\arxiv{1210.0243})

\bibitem[Cr99]{Cr}
  \au{J. Crips}
  \ti{Injective maps between Artin groups. In: Geometric group theory down under (Canberra, 1996).}
  \jo{de Gruyter, Berlin} \yr{1999} \pp{119}{137}

\bibitem[D23]{D23}
  \au{H. Dell}
  \ti{Stability Conditions on Free Abelian Quotients}
  \arxiv{2307.00815}

\bibitem[DD07]{DD}
  \au{B. Deng and J. Du}
  \ti{Folding derived categories with Frobenius functors}
  \jo{J. Pure Appl. Algebra} \vo{208} \yr{2007} \pp{1023}{1050}

\bibitem[De72]{D}
  \au{P.~Deligne}
  \ti{Les immeubles des groupes de tresses g\'{e}n\'{e}ralis\'{e}s}
  \jo{Invent. Math.} \vo{17} \yr{1972} \pp{273}{302}

\bibitem[DT1]{DT1}
  \au{D.D. Duffield and P. Tumarkin}
  \ti{Categorifications of Non-Integer Quivers: Types $H_4$, $H_3$ and $I_2(2n + 1)$}
  \arxiv{2204.12752}.

\bibitem[DT2]{DT2}
  \au{D.D. Duffield and P. Tumarkin}
  \ti{Categorification of non-integer quivers: type $I_2(2n)$}
  \arxiv{2302.06988}.

\bibitem[E14]{Ela}
  \au{A.~Elagin}
  \ti{On equivariant triangulated categories}
  \arxiv{1403.7027}

\bibitem[ENO05]{ENO}
  \au{P. Etingof, D. Nikshych and V. Ostrik}
  \ti{On fusion categories}
  \jo{Ann. Math.} \vo{162} \yr{2005} \pp{581}{642}
  (\arxiv{math/0203060})

  \bibitem[EGNO]{EGNO}
  \au{P. Etingof, S. Gelaki, D. Nikshych and V. Ostrik}
  \ti{Tensor Categories}
  \jo{Math. Surveys and Monographs} \vo{205} \yr{2015}.

\bibitem[FR04]{FR}
  \au{S. Fomin and N. Reading}
  \ti{Root Systems and Generalized Associahedra}
  {IAS/Park City Mathematics Series Volume 14, 2004}
  (\href{https://arxiv.org/abs/math/0505518}{\arxiv:math/0505518})

\bibitem[FR03]{FZ}
  \au{S. Fomin and A. Zelevinsky}
  \ti{Y -systems and generalized associahedra}
  \jo{Ann. Math.} \vo{158} \yr{2003} \pp{977}{1018}
  (\arxiv{hep-th/0111053})

\bibitem[HHQ]{HHQ}
  \au{Z. Han, P. He and Y. Qiu}
  \ti{Cluster braid groups of Coxeter-Dynkin diagrams}
  \arxiv{2310.02871}.

\bibitem[H1]{H1}
  \au{E. Heng}
  \ti{Coxeter quiver representations in fusion categories and Gabriel's theorem}
  \arxiv{2302.01866}.

\bibitem[H2]{H2}
  \au{E. Heng}
  \ti{Categorification and Dynamics in Generalised Braid Groups}
  \arxiv{2307.13379}.

\bibitem[IQ23]{IQ}
  \au{A. Ikeda and Y. Qiu}
  \ti{$q$-Stability conditions on Calabi-Yau-$\mathbb{X}$ categories}
  \jo{Compos. Math.} \vo{159} \yr{2023} \pp{1347}{1386}
  (\arxiv{1807.00469})

\bibitem[Ke05]{Ke0}
  \au{B.~Keller}
  \ti{On triangulated orbit categories}
  \jo{Doc. Math.} \vo{10} \yr{2005} \pp{551}{581}
  (\arxiv{math/0503240})

\bibitem[Ke06]{Ke2}
  \au{B.~Keller}
  \ti{On differential graded categories}
  \arxiv{0601185}

\bibitem[Ke11]{Ke1}
  \au{B.~Keller, M.Van~den~ Bergh}
  \ti{Deformed Calabi-Yau completions}
  \jo{J. reine angew. Math.} \vo{654} \yr{2011} \pp{125}{180}
  (\arxiv{0908.3499})

\bibitem[KQ15]{KQ1}
  \au{A.~King \and Y.~Qiu}
  \ti{Exchange graphs and Ext quivers}
  \jo{Adv. Math.} \vo{285} \yr{2015} \pp{1106}{1154}
  (\arxiv{1109.2924v2})

\bibitem[KQ20]{KQ2}
  \au{A.~King \and Y.~Qiu}
  \ti{Cluster exchange groupoids and framed quadratic differentials}
  \jo{Invent. Math.} \vo{220} \yr{2020} \pp{479}{523}
  (\arxiv{1805.00030})

%\bibitem[KO02]{KO}
%  \au{A. Kirillov and V. Ostrik}
%  \ti{On a q-analog of McKay correspondence and ADE classification of $\hat{\mathfrak{sl}}_2$ conformal field theories}
%  \jo{Adv. Math.} \vo{171} \yr{2002} \pp{183}{227}
%  (\arxiv{math/0101219})

\bibitem[MMS09]{MMS09}
  \ti{E. Macr\`{\i}, S. Mehrotra and P. Stellari}
  \ti{Inducing stability conditions}
  \jo{J. Algeb. Geom.} \vo{18} \yr{2009} \pp{605}{649}
%  (\arxiv{})

\bibitem[P12]{Par}
  \au{L.~Paris}
  \ti{$K(\pi,1)$-conjecture for Artin groups}
  \jo{Ann. Fac. Sci. Toulouse Math.} \vo{23} \yr{2014} \pp{361}{415}
  (\arxiv{1211.7339})

\bibitem[QW18]{QW}
  \au{Y.~Qiu \and J.~Woolf}
  \ti{Contractible stability spaces and faithful braid group actions}
  \jo{Geom. Topol.} \vo{22} \yr{2018} \pp{3701}{3760}
  (\arxiv{1407.5986})

\bibitem[To07]{Toen}
  \au{B. To\"{e}n}
  \ti{The homotopy theory of dg-categories and derived Morita theory}
  \jo{Invent. Math.} \vo{167} \yr{2007} \pp{615}{667}
  (\href{https://arxiv.org/abs/math/0408337}{aXiv:0408337})

\end{thebibliography}
\end{document}